    \numberwithin{equation}{section}
    \numberwithin{figure}{section}
\numberwithin{table}{section}
\numberwithin{algorithm}{section}
\def\hypergeom#1#2#3#4#5{{}_#1 F_{#2}\left({#3\atop#4}; \ #5\right)}
\def\eqref#1{{\normalfont(\ref{#1})}}
\def\eqref#1{{\normalfont(\ref{#1})}}
\newtheorem{lem}[theorem]{Lemma}
\newtheorem{thm}[theorem]{Theorem}
\newtheorem{cor}[theorem]{Corollary}
\newtheorem{rem}[theorem]{Remark}
\newcommand{\A}{{\mathcal A}}
\newcommand{\bbm}{\begin{bmatrix}}
\newcommand{\ebm}{\end{bmatrix}}
\newcommand{\bem}{\begin{pmatrix}}
\newcommand{\eem}{\end{pmatrix}}
\newcommand{\beq}{\begin{equation}}
\newcommand{\beqs}{\begin{equation*}}
\newcommand{\bet}{\begin{table}}
\newcommand{\eeq}{\end{equation}}
\newcommand{\eeqs}{\end{equation*}}
\newcommand{\beqr}{\begin{eqnarray}}
\newcommand{\nc}{\newcommand}
\nc{\arrow}{{\rm arrow\,}}
\nc{\Arrow}{{\rm Arrow\,}}
\nc{\BoDiag}{{\rm B^0Diag\,}}
\nc{\bodiag}{{\rm b^0diag\,}}
\nc{\Mm}{{\mathcal M}^{m} }
\nc{\Mmn}{{\mathcal M}^{mn} }
\nc{\Mnr}{{\mathcal M}_{nr} }
\nc{\Mnmr}{{\mathcal M}_{(n-1)r} }
\nc{\kwqqp}{Q{$^2$}P\,}
\nc{\kwqqps}{Q{$^2$}Ps}
\nc{\notinaho}{(X,S)\in \overline{AHO}(\A)}
\nc{\inaho}{(X,S)\in AHO(\A)}
\newcommand{\bea}{\begin{eqnarray}}%
\newcommand{\eea}{\end{eqnarray}}%
\newcommand{\beas}{\begin{eqnarray*}}%
\newcommand{\eeas}{\end{eqnarray*}}%
\newcommand{\Hnp}[1][]{\,\mathbb{H}_+^{\ifthenelse{\equal{#1}{}}{n}{#1}}}
\newcommand{\Hn}[1][]{\,\mathbb{H}^{\ifthenelse{\equal{#1}{}}{n}{#1}}}
\newcommand{\Dn}[1][]{\,\mathbb{D}^{\ifthenelse{\equal{#1}{}}{n}{#1}}}
\begin{document}

\title{Spectral density functions of bivariable stable polynomials\thanks{Research of HJW is supported by 
Simons Foundation grant 355645 and National Science Foundation grant DMS 2000037.}}


\author{Jeffrey S. Geronimo  \and Hugo J. Woerdeman \and
        Chung Y. Wong 
}


\institute{Jeffrey S. Geronimo\at School of Mathematics\\ Georgia Institute of Technology\\ 225 North Ave\\ Atlanta, GA 30332
              \email{geronimo@math.gatech.edu}           
           \and
           Hugo J. Woerdeman \at
             Department of Mathematics\\ Drexel University\\ 
3141 Chestnut Street\\ Philadelphia, PA 19104 \email{hugo@math.drexel.edu} 
\and 
Chung Y. Wong \at Department of Mathematics\\ County College of Morris\\ 214 Center Grove Rd.\\
Randolph, NJ 07869 \email{cwong@ccm.edu} 
}

\date{Received: date / Accepted: date}

\maketitle

\begin{abstract}
The relationship between a stable multivariable polynomial $p(z)$ and the Fourier coefficients of its spectral density 
function $1/|p(z)|^2$, is further investigated. In this paper we focus on the radial asymptotics of the Fourier coefficients for a specific choice of a two variable polynomial. Hypergeometric functions appear in the analysis, and new results are derived for these as well.
\keywords{Stable polynomial \and Spectral density function \and Fourier coefficients \and Hypergeometric functions\and Analytic combinatorics}
\subclass{33C05 \and 33C20 \and 42C05 \and 41A60 \and 05A16 \and 47A57}
\end{abstract}

\section{Introduction}
\label{sec:intro}

The design of multivariate autoregressive filters in statistical signal processing involves estimating
the autocorrelation coefficients of a stationary process and subsequently computing the coefficients 
of the filter. Mathematically the autoregressive filter is represented by a {\it stable} polynomial $p(z_1, \ldots z_d )$ in $d$
variables; that is, $p(z_1, \ldots , z_d ) \neq 0$ for $(z_1, \ldots ,  z_d ) \in 
\overline{\mathbb D}^d$, where ${\mathbb D} = \{ w \in {\mathbb C} : |w| <1 \}$. We will also use the notation  ${\mathbb T} = \{ w \in {\mathbb C} : |w| =1 \}$ and $\overline{\mathbb D} = {\mathbb D} \cup {\mathbb T}$. 
The corresponding autocorrelation coefficients $(c_k)_{k \in {\mathbb Z}^d}$  are then represented by the Fourier coefficients of the so-called {\it spectral density function} $f(z) = \frac{1}{p(z)\overline{p}(1/z)}$, where for $p(z)=\sum p_k z^k$ we have $\overline{p} (1/z) =\sum \overline{p}_k z^{-k}$ (we use the multivariable notation $z^k = z_1^{k_1} \cdots z_d^{k_d}$ when $z=(z_1, \ldots , z_d )$ and $k=(k_1, \ldots , k_d )$). Thus 
$$c_k=  \frac{1 }{(2\pi)^d} \int_{[0,2\pi]^d} f(e^{i\theta_1}, \ldots 
, e^{i\theta_d}) e^{-i\sum_{r=1}^d \theta_r k_r } d\theta_1 \cdots d\theta_d ,  k=(k_1, \ldots , k_d )\in{\mathbb Z}^d .  $$
Note that $c_k = \overline{c_{-k}}$. 


A polynomial $p(z)=p(z_1, \ldots ,  z_d )$ of degree $(n_1, \ldots , n_d)$ (thus of degree $n_1$ in $z_1$, $n_2$ in $z_2$, etc.) has
$\prod_{i=1}^d (n_i+1)$ coefficients. Ideally one would like to determine $p(z)$ based on the same number of autocorrelation coefficients. In the classical case, when $d=1$, the mathematical foundations go back to Toeplitz and Szeg\"o in the 1910s, while the signal processing application was developed in the 1950s. The connecting equation is the so-called Yule-Walker equation 
\begin{equation}\label{YW} \begin{pmatrix} c_0 & c_{-1} & \cdots & c_{-n} \cr
c_1 & c_0 & \cdots & c_{-n+1} \cr
\vdots & \vdots & \ddots & \vdots \cr
c_n & c_{n-1} & \cdots & c_0
\end{pmatrix}^{-1} \begin{pmatrix} 1 \cr 0 \cr \vdots \cr 0 \end{pmatrix} = \overline{p_0} \begin{pmatrix} p_0 \cr p_1 \cr \vdots \cr p_n \end{pmatrix}  . \end{equation}
A solution $p(z)=\sum_{j=0}^n p_j z^j$ exists exactly when the Toeplitz matrix in \eqref{YW} is positive definite, and in that case the polynomial can be determined via \eqref{YW}. Since $c_k = \overline{c_{-k}}$, one only needs $c_0 , \ldots , c_n$ to determine $p_0 , \ldots , p_n$ (uniquely, when one requires the normalization $p_0>0$). While in the 1970s there were some partial results for the two-variable case (see, e.g., \cite{DGK}), the autoregressive filter problem for $d=2$ was resolved in the 2000s by two of the current authors; see \cite{GWAnnals}. Again there is a Yule-Walker type equation, but now the number of autocorrelation coefficients needed to build the two-variable Toeplitz matrix is asymptotically twice the number of coefficients of the polynomial. Thus there is a mismatch, which means that the autocorrelation coefficients must satisfy additional constraints. In \cite{GWAnnals} it was shown that these additional constraints appear in the form of a low rank submatrix of the two-variable Toeplitz matrix. As a byproduct the authors obtained in \cite{GWAnnals} an expression for the Fourier coefficients $c_{k_1, k_2}$ in the region where $k_1k_2 \le 0$. Due to the stability of $p$, this expression just involves the common zeros of $p(z_1,z_2)$ and it reverse $\overleftarrow{p} (z_1, z_2  ):= z_1^{n_1} z_2^{n_2} \overline{p} (1/z_1, 1/z_2 )$, which the authors termed the {\it intersecting zeros} of $p$ \cite{GWAnnals, GWIEEE}. Using this expression it is easy to analyze asymptotics of $c_{k_1, k_2}$ in the quadrants where $k_1k_2 \le 0$. The asymptotics in the other quadrants was not addressed. This gave the impetus for the current paper. While, as we will see, the asymptotics question can be addressed in great generality by analytic combinatorics theory (see \cite{PW}), we are also motivated to understand the direct connection between the coefficients $p_k$ of the polynomial and the autocorrelation coefficients $c_k$. We therefore decided to analyze in depth the correspondence for the special case when $p(z_1, z_2) = 1-\frac{z_1+z_2}{r}$, where $r>2$, in part as this polynomial lends itself to generalization to three or more variables where no general theory exists yet. As it turned out this two variable example leads to formulas for the autocorrelation coefficients in terms of hypergeometric functions. Due to this we will along the way also develop some new identities and asymptotic results for hypergeometric functions. 

The paper is organized as follows. In Section 2 we determine the Fourier coefficients of the spectral density function of $1-\frac{z_1+z_2}{r}$, along with new identities for hypergeometric functions. In Section 3 we study the radial asymptotics of these Fourier coefficients, which involves the asymptotics of certain hypergeometric functions with large parameters. In Section 4 we determine the orthogonal polynomials associated with the measure on the bitorus with weight given by the spectral density function. 

\section{Determining the Fourier Coefficients}
\label{sec:Fourier}

As we will see in Theorem \ref{coeffi}, hypergeometric functions come up in a natural way when computing the Fourier coefficients $c_{k_1,k_2}$ of the spectral density function associated with $ p(z_1,z_2) = 1 - \frac{z_1 +z_2}{r} $.
Recall that the hypergeometric functions $\ _2 F_1$ and $\ _3 F_2$ are defined for $|x|<1$ via the power series
$$ \hypergeom{2}{1}{a,b}{d}{x} =\sum_{n=0}^\infty \frac{(a)_n (b)_n}{(d)_n} \frac{x^n}{n!} , \ \hypergeom{3}{2}{a,b,c}{d,e}{x} =\sum_{n=0}^\infty \frac{(a)_n (b)_n(c)_n}{(d)_n(e)_n} \frac{x^n}{n!}. $$ Here the Pochhammer function $(q)_n$ is defined by
$$ (q)_n = \begin{cases} 1 , & n=0; \\ q(q+1)\cdots(q+n-1) , & {\rm otherwise} . \end{cases}$$
Alternatively, $(q)_n$ is also referred to as the rising factorial. 
We refer to the parameters $a,b,c$ as {\it numerator parameters} and to $d,e$ as {\it denominator parameters}. Note that when $c=e$, we have $\hypergeom{3}{2}{a,b,c}{d,c}{x} =  \hypergeom{2}{1}{a,b}{d}{x}$.
One of the formulas we will be using repeatedly is the {\it Chu-Vandermonde identity}, which states that 
\begin{equation}\label{chu} \sum_{s=0}^n \frac{(-n)_s (b)_s}{s! \ (d)_s } = \hypergeom{2}{1}{-n,b}{d}{1} = \frac{ (d-b)_n}{(d)_n}. \end{equation}
Other useful identities involving hypergeometric functions may, for instance, be found in \cite{AAR, BW, GR, Olver}. We now summarize those that we will be using.

\begin{lem}
The following relations hold:
\begin{equation}\label{cont1}
b\ \hypergeom{3}{2}{a,a_2,a_3}{b,b_2}{x}-a\ \hypergeom{3}{2}{a+1,a_2,a_3}{b+1,b_2}{x}+(a-b)\hypergeom{3}{2}{a,a_2,a_3}{b+1,b_2}{x}=0,
\end{equation}
\begin{equation}\label{cont2}
bcx\ \hypergeom{3}{2}{a+1,b+1,c+1}{d+1,e+1}{x}+de\left(\hypergeom{3}{2}{a,b,c}{d,e}{x}-\hypergeom{3}{2}{a+1,b,c}{d,e}{x}\right)=0,
\end{equation}
\begin{equation}\label{cont3}
(b-c)\hypergeom{2}{1}{a,b-1,}{c}{x}-a(x-1)\hypergeom{2}{1}{a+1,b}{c}{x}+(c-a-b)\hypergeom{2}{1}{a,b}{c}{x}=0,
\end{equation}
and 
\begin{equation}\label{Pfaff}
\hypergeom{2}{1}{a,b}{c}{x}=(1-x)^{-a} \hypergeom{2}{1}{a,c-b}{c}{\frac{x}{x-1}}.
\end{equation}
\end{lem}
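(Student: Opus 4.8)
The plan is to establish \eqref{cont1}, \eqref{cont2}, and \eqref{cont3} by comparing coefficients of $x^n$ in the defining power series, and to obtain the Pfaff transformation \eqref{Pfaff} from a double-series rearrangement combined with the Chu-Vandermonde identity \eqref{chu}. All four are classical, so the real content is bookkeeping with Pochhammer symbols.

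For \eqref{cont1}, I would write each ${}_3F_2$ as $\sum_{n\ge 0} t_n x^n$ and express the coefficient of $x^n$ on the left-hand side as a scalar multiple of the coefficient of $x^n$ in $\hypergeom{3}{2}{a,a_2,a_3}{b+1,b_2}{x}$. Using $(b)_n/(b+1)_n = b/(b+n)$ and $(a+1)_n/(a)_n = (a+n)/a$, that scalar equals $(b+n)-(a+n)+(a-b)$, which vanishes identically. For \eqref{cont2} I would match coefficients of $x^n$ for each $n\ge 1$ (the $n=0$ terms vanish): on the contribution from $bcx\,\hypergeom{3}{2}{a+1,b+1,c+1}{d+1,e+1}{x}$ I reindex $n\mapsto n-1$, and on the difference $\hypergeom{3}{2}{a,b,c}{d,e}{x}-\hypergeom{3}{2}{a+1,b,c}{d,e}{x}$ I use $(a)_n-(a+1)_n = -\tfrac{n}{a+n}(a+1)_n = -n\,(a+1)_{n-1}$ together with $(b)_n = b\,(b+1)_{n-1}$ and the analogous relations for $c,d,e$; the factors $b,c,d,e$ then cancel and the two contributions are exact negatives of one another. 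For \eqref{cont3} the only new feature is the factor $x-1$ multiplying $\hypergeom{2}{1}{a+1,b}{c}{x}$, so the coefficient of $x^n$ receives a contribution from the $x^{n-1}$-term of that series as well; after dividing out the common factor $\frac{(a)_n (b)_n}{(c)_n\, n!}$ and using $(b-1)_n/(b)_n = (b-1)/(b+n-1)$, one is left with a single rational function of $a,b,c,n$ whose numerator, once expanded, is the zero polynomial. This last expansion is the messiest of the three but is still purely mechanical.

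For \eqref{Pfaff}, I would begin with the right-hand side. Since $\bigl(\tfrac{x}{x-1}\bigr)^n = (-1)^n x^n (1-x)^{-n}$, expanding $(1-x)^{-a-n} = \sum_{m\ge 0}\frac{(a+n)_m}{m!}x^m$ turns the right-hand side into a double sum over $n$ and $m$; for $x$ in a small disk about the origin this double sum is absolutely convergent and may be summed along the diagonals $N = n+m$. The coefficient of $x^N$ is $\sum_{n=0}^{N}\frac{(a)_n (c-b)_n (-1)^n (a+n)_{N-n}}{(c)_n\, n!\,(N-n)!}$, and using $(a)_n (a+n)_{N-n} = (a)_N$ and $(-1)^n\binom{N}{n} = (-N)_n/n!$ this becomes $\frac{(a)_N}{N!}\,\hypergeom{2}{1}{-N,c-b}{c}{1}$. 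The Chu-Vandermonde identity \eqref{chu} evaluates this terminating ${}_2F_1$ at $1$ as $(b)_N/(c)_N$, so the coefficient of $x^N$ equals $\frac{(a)_N (b)_N}{(c)_N\, N!}$, which is precisely the $x^N$-coefficient of $\hypergeom{2}{1}{a,b}{c}{x}$. The identity then extends from the disk to the region stated in the lemma by analytic continuation.

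The calculations are elementary throughout, and I do not anticipate a genuine obstacle. The single point that merits explicit justification is the interchange of summations in the proof of \eqref{Pfaff}, which is valid because $\sum_{n,m}\bigl|\tfrac{(a)_n (c-b)_n (a+n)_m}{(c)_n\, n!\, m!}\bigr|\,|x|^{n+m} < \infty$ for $x$ near $0$; everything else is a matter of careful manipulation of rising factorials, carried out first for generic parameter values and extended by continuity if needed.
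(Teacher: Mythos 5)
Your proposal is correct. For the three contiguous relations \eqref{cont1}--\eqref{cont3} you do exactly what the paper does: verify the identities by matching coefficients of $x^n$, and your Pochhammer bookkeeping (e.g.\ $(b+n)-(a+n)+(a-b)=0$ for \eqref{cont1}, and the cancellation via $(a)_n-(a+1)_n=-n\,(a+1)_{n-1}$ together with $(b)_n=b\,(b+1)_{n-1}$ for \eqref{cont2}) checks out. The only divergence is \eqref{Pfaff}: the paper does not prove it but simply cites \cite[Theorem 2.2.5]{AAR}, where the standard argument runs through the Euler integral representation, whereas you give a self-contained proof by expanding $(1-x)^{-a-n}$, summing along diagonals, and evaluating the resulting terminating ${}_2F_1$ at $1$ with Chu--Vandermonde \eqref{chu}. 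That route is a classical and valid alternative; it buys a proof that stays entirely within the series/Chu--Vandermonde toolkit already set up in the paper (at the small cost of justifying the series rearrangement for $|x|$ small and invoking analytic continuation, both of which you handle correctly).
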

\begin{proof} 
The first three can be checked directly by comparing the coefficients of $x^n$ on both sides in each of the equations. These contiguous relations can also be found on the Wolfram webpages \cite{wolfram} for $_2F_1$'s and $_3F_2$'s. The fourth equation is known as Pfaff's transformation formula; see, for instance, \cite[Theorem 2.2.5]{AAR}. 
\end{proof}

Our first main result gives an expression for the Fourier coefficients of the spectral density function of $1-\frac{z_1+z_2}{r}$.

\begin{theorem}\label{coeffi} 
Let $ p(z_1,z_2) = 1 - \frac{z_1 +z_2}{r} $
with $r>2$, and let $ c_{k_1,k_2}$ denote the Fourier coefficients of its spectral density function.
Then we have
$$ c_{k_1,k_2} = \frac{1}{\sqrt{1-\frac{4}{r^2}}} \left(\frac{r}{2}-\sqrt{\frac{r^2}{4}-1}\right)^{|k_1|+|k_2|}, \  k_1 k_2 \le 0, $$
and 
$$ c_{k_1,k_2} = \frac{{|k_1|+|k_2| \choose |k_1|}}{r^{|k_1|+|k_2|}}\ \hypergeom{3}{2}{1,\frac{|k_1|+|k_2|}{2}+1,\frac{|k_1|+|k_2|+1}{2}}{|k_1|+1,|k_2|+1}{\frac{4}{r^2}}, \ k_1 k_2 >0. $$
\end{theorem}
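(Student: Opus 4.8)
The plan is to compute the Fourier integral
$$
c_{k_1,k_2}=\frac{1}{(2\pi)^2}\int_{[0,2\pi]^2}\frac{d\theta_1\,d\theta_2}{\left|1-\frac{e^{i\theta_1}+e^{i\theta_2}}{r}\right|^2}
\,e^{-i(k_1\theta_1+k_2\theta_2)}
$$
by first expanding $1/|p|^2$ as a double power series. Writing $w=\frac{z_1+z_2}{r}$ and using $\frac{1}{|1-w|^2}=\sum_{j,\ell\ge 0}w^j\overline{w}^{\,\ell}$ (valid on $\mathbb T^2$ since $r>2$ forces $|w|<\frac{2}{r}<1$), and then expanding each $w^j=r^{-j}(z_1+z_2)^j$ by the binomial theorem, one reduces $c_{k_1,k_2}$ to a single sum by orthogonality of $\{e^{in\theta}\}$. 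Concretely, since $c_{k_1,k_2}=c_{-k_1,-k_2}=\overline{c_{k_1,k_2}}$ and everything is symmetric in $(k_1,k_2)$, it suffices to treat $k_1,k_2\ge 0$; in the product of the two binomial expansions one must match powers so that the $z_1$-exponent difference is $k_1$ and the $z_2$-exponent difference is $k_2$. This leaves a one-parameter family indexed by, say, $s$ (the number of $z_2$-factors drawn from the holomorphic copy of $w$), and collecting the Pochhammer symbols turns the resulting sum into a ${}_3F_2$ evaluated at $4/r^2$.

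The two cases split naturally. For $k_1k_2\le 0$, say $k_1\ge 0\ge k_2$, the matching constraints are rigid enough that the sum telescopes to a single geometric-type series; in fact the generating identity for central binomial coefficients $\sum_{m\ge 0}\binom{2m}{m}x^m=(1-4x)^{-1/2}$ (together with a shift) yields precisely $\frac{1}{\sqrt{1-4/r^2}}\bigl(\frac r2-\sqrt{\frac{r^2}{4}-1}\bigr)^{|k_1|+|k_2|}$. An alternative, and perhaps cleaner, route here is to use the known closed form for the intersecting-zeros expression from \cite{GWAnnals} alluded to in the introduction, since $p(z_1,z_2)=1-\frac{z_1+z_2}{r}$ has a single intersecting-zero pair that one can compute explicitly. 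Either way this case is essentially bookkeeping. For $k_1k_2>0$ (so both positive, after the symmetry reduction), the single sum that survives has general term proportional to
$$
\frac{1}{s!\,(s+k_1)!\,(s+k_2)!}\binom{2s+k_1+k_2}{s+k_1}\left(\frac{1}{r^2}\right)^{s},
$$
or an equivalent rearrangement; factoring out the $s=0$ term $\binom{k_1+k_2}{k_1}r^{-(k_1+k_2)}$ and rewriting the ratio of consecutive terms as a rational function of $s$ identifies the numerator parameters $1,\ \frac{k_1+k_2}{2}+1,\ \frac{k_1+k_2+1}{2}$ (the half-integer pair coming from the duplication formula $(2m)!=4^m m!\,(1/2)_m$ applied to the central binomial part) and denominator parameters $k_1+1,\ k_2+1$, with argument $4/r^2$. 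At that point the claimed ${}_3F_2$ formula falls out.

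The main obstacle I anticipate is the combinatorial bookkeeping in the $k_1k_2>0$ case: the double binomial expansion of $(z_1+z_2)^j(\bar z_1+\bar z_2)^\ell$ produces a genuine double sum, and imposing the two exponent-matching conditions (one for each variable) leaves a sum whose index must be chosen carefully so that the surviving term is recognizably hypergeometric. One has to be disciplined about which binomial coefficients carry which indices, and the appearance of the half-integer parameters is entirely an artifact of applying the Legendre duplication formula to convert a $\binom{2m}{m}$ (equivalently a $(1)_{2m}/(1)_m$ ratio) into a product of Pochhammer symbols with step one; getting the $(a_2,a_3)$ pair exactly right — rather than off by a shift — is the delicate point. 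Once the series is in standard hypergeometric form, no further identity is needed to state the result, though the contiguous relations \eqref{cont1}–\eqref{cont2} and the Chu–Vandermonde identity \eqref{chu} will presumably be invoked to reconcile this ${}_3F_2$ with the closed form on the boundary $k_1k_2=0$, providing a consistency check.
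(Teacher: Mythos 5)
Your overall strategy---expand $1/|p|^2$ on $\mathbb{T}^2$, match Laurent exponents, and recognize the surviving series as hypergeometric, with the half-integer numerator parameters produced by the duplication formula---is exactly the paper's, and your alternative closed-form route for $k_1k_2\le 0$ via the intersecting zeros of \cite{GWAnnals} is legitimate. However, there is a genuine gap at the central step. Matching exponents does \emph{not} leave a one-parameter family: the four indices (the powers $j,\ell$ of $w$ and $\bar w$, plus the two binomial indices) are cut down by only two constraints, so a \emph{double} sum survives, in both sign cases. The collapse to a single sum is precisely where the paper has to work: the inner sum over the split between the two binomial expansions is evaluated by the Chu--Vandermonde identity \eqref{chu} (equivalently, the Vandermonde convolution $\sum_{b}\binom{k_1+k_2+s}{k_2+b}\binom{s}{b}=\binom{k_1+k_2+2s}{k_2+s}$), and this is carried out separately for $k_1,k_2\ge 0$ and for $c_{k_1,-k_2}$. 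Your proposal asserts this step away, and consistently with that writes down an incorrect general term: after the collapse the term is simply $\binom{2s+k_1+k_2}{s+k_1}\,r^{-(2s+k_1+k_2)}$, with no extra factor $\frac{1}{s!\,(s+k_1)!\,(s+k_2)!}$. With the term as you wrote it, the ratio of consecutive terms does not yield the parameters $1,\ \tfrac{k_1+k_2}{2}+1,\ \tfrac{k_1+k_2+1}{2};\ k_1+1,\ k_2+1$; note also that your term at $s=0$ is not the $\binom{k_1+k_2}{k_1}r^{-(k_1+k_2)}$ you propose to factor out, so the hedge ``or an equivalent rearrangement'' does not rescue the identification.

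Once this is repaired, the rest of your plan does go through and tracks the paper closely: from $c_{k_1,k_2}=\sum_{s\ge0}\binom{2s+k_1+k_2}{s+k_1}r^{-(2s+k_1+k_2)}$ the identity $(k_1+k_2+s+1)_s=4^s\bigl(\tfrac{k_1+k_2}{2}+1\bigr)_s\bigl(\tfrac{k_1+k_2+1}{2}\bigr)_s/(k_1+k_2+1)_s$ (the paper's \eqref{k1k2ident}, i.e.\ your duplication step) gives the stated ${}_3F_2$ at $4/r^2$. For $k_1k_2\le 0$ the collapsed sum is $\sum_{s\ge0}\binom{2s+k_1+k_2}{s}r^{-(2s+k_1+k_2)}$, which your Catalan-type generating function $\sum_{s}\binom{2s+m}{s}x^s=(1-4x)^{-1/2}\bigl(\tfrac{1-\sqrt{1-4x}}{2x}\bigr)^{m}$ sums directly to the closed form; the paper instead identifies it as the ${}_2F_1$ in \eqref{ck2negf} and applies the quadratic transformation \eqref{quadra}, an equivalent finish.
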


\begin{proof}
Observe that for $|z_1+z_2|<r$,
\begin{align}\label{2dinv}
\frac{1}{p(z_1,z_2)}&=\sum_{i=0}^{\infty}\left(\frac{z_1+z_2}{r}\right)^i\nonumber\\&=\sum_{i=0}^{\infty}\frac{1}{r^i}\sum_{j=0}^i\frac{i!}{j!(i-j)!}z_1^{i-j}z_2^{j}\nonumber\
\cr &=\sum_{j=0}^{\infty}\sum_{i=j}^\infty\frac{1}{r^i}\frac{i!}{j!(i-j)!}z_1^{i-j}z_2^{j}\nonumber\\&=\sum_{j=0}^{\infty}\sum_{i=0}^{\infty}\frac{1}{r^{i+j}}\frac{(i+j)!}{j!i!}z_1^{i}z_2^{j}.
\end{align}
It should be noted that the series above as well as subsequent ones are absolutely convergent on a domain containing ${\mathbb T}^2 = \{ (z_1,z_2) : |z_1|=|z_2|=1 \}$, which thus allows us to freely rearrange the terms as desired.
Therefore, on a domain containing ${\mathbb T}^2$, 
\begin{align}\label{3dsp}
f(z_1,z_2)&=\frac{1}{p(z_1,z_2)\bar p(1/z_1,1/z_2)}\nonumber\\&=\sum_{i_1=0}^{\infty}\sum_{i_2=0}^{\infty}\sum_{j_1=0}^{\infty}\sum_{j_2=0}^{\infty}\frac{(i_1+j_1)!}{j_1!i_1!}\frac{(i_2+j_2)!}{j_2!i_2!}\frac{z_1^{i_1-i_2}z_2^{j_1-j_2}}{r^{i_1+i_2+j_1+j_2}}\nonumber\\&=\sum_{k_1=-\infty}^{\infty}\sum_{k_2=-\infty}^{\infty}c_{k_1,k_2}z_1^{k_1}z_2^{k_2}.
\end{align}
The above sum on $c_{k_1,k_2}$ can be divided into four regions depending on the signs of $k_1$ and $k_2$. However since $\overline{c_{k_1,k_2}} = c_{-k_1, -k_2}$ we may restrict ourselves to the case when $k_1\ge 0$.

First let $k_1\ge0$ and $k_2\ge0$. Matching coefficients for $k_2=j_1-j_2$ and $k_1=i_1-i_2$ with $j_1=k_2+j_2$ and $i_1=k_1+i_2$ then setting $i=i_2+j_2$ and $j=j_2$ yields,
$$
c_{k_1,k_2}=\sum_{i=0}^{\infty}\frac{1}{r^{2i+k_1+k_2}}\sum_{j=0}^{i}\frac{i!}{j!(i-j)!}\frac{(k_1+k_2+i)!}{(k_1+i-j)!\ (k_2+j)!}.
$$
Since 
\begin{equation}\label{kkdent}
(k_2+j)!=(k_2+1)_j k_2!,
\end{equation}
\begin{equation}\label{iident}
\frac{i!}{(i-j)!}=(-1)^j(-i)_j,
\end{equation}
and
\begin{equation}\label{kident}
(k_1+i-j)!=\frac{(k_1+i)!}{(k_1+i-j+1)\cdots(k_1+i)}=(-1)^j\frac{(k_1+i)!}{(-k_1-i)_j},
\end{equation}
the sum on $j$ can be written as
\begin{align*}
\sum_{j=0}^i\frac{i!(k_1+k_2+i)!}{j!(i-j)!(k_1+i-j)!(k_2+j)!}&=\frac{(k_1+k_2+i)!}{k_2!(k_1+i)!}\sum_{j=0}^i\frac{(-i)_j(-k_1-i)_j}{j!(k_2+1)_j}\\&=\frac{(k_1+k_2+i)!(k_1+k_2+i+1)_i}{k_2!(k_1+i)!(k_2+1)_i}, 
\end{align*}
where the Chu-Vandermonde identity \eqref{chu} has been used to obtain the last equality. With the identity $\frac{(k_1+k_2+i)!}{(k_1+i)!}=\frac{(k_1+k_2)!(k_1+k_2+1)_i}{k_1!(k_1+1)_i}$ we find 

\begin{equation}\label{c000}
c_{k_1,k_2}=\frac{(k_1+k_2)!}{k_1!k_2!}\sum_{i=0}^{\infty}\frac{1}{r^{2i+k_1+k_2}}\frac{(k_1+k_2+1)_i(k_1+i+k_2+1)_i}{(k_1+1)_i(k_2+1)_i}.
\end{equation}
Substitution of the identity
\begin{equation}\label{k1k2ident}
(k_1+k_2+i+1)_i=4^i\frac{\left(\frac{k_1+k_2}{2}+1\right)_i\left(\frac{k_1+k_2+1}{2}\right)_i}{(k_1+k_2+1)_i }
\end{equation}
yields
\begin{align}\label{ck1k2}
c_{k_1,k_2}&=\frac{(k_1+k_2)!}{k_1!k_2!}\sum_{i=0}^{\infty}\frac{4^i}{r^{2i+k_1+k_2}}\frac{\left(\frac{k_1+k_2}{2}+1\right)_i\left(\frac{k_1+k_2+1}{2}\right)_i}{(k_2+1)_i(k_1+1)_i}\nonumber\\&=\frac{(k_1+k_2)!}{k_1!k_2!}\frac{1}{r^{k_1+k_2}}\hypergeom{3}{2}{1,\frac{k_1+k_2}{2}+1,\frac{k_1+k_2+1}{2}}{k_1+1,k_2+1}{\left(\frac{2}{r}\right)^2}.
\end{align}
This proves the case when $k_1\ge0$ and $k_2\ge0$. 

Next we compute $c_{k_1, -k_2}$, where $k_1,k_2 \ge 0$. 
%
%
Using \eqref{3dsp} with $i_1=k_1+i_2$ and $j_1=j_2-k_2$, the spectral density function restricted to this region can be written as
$$
\sum_{k_1=0}^{\infty}\sum_{k_2=0}^{\infty}c_{k_1,-k_2}z_1^{k_1}z_2^{-k_2}= \sum_{k_1=0}^{\infty}\sum_{j_2=0}^{\infty}\sum_{k_2=0}^{j_2}\sum_{i_2=0}^{\infty}\frac{(k_1+i_2+j_2-k_2)!}{(j_2-k_2)!(i_2+k_1)!}\frac{(i_2+j_2)!}{j_2!i_2!}\frac{z_1^{k_1} z_2^{-k_2}}{r^{k_1-k_2+2i_2+2j_2}}.
$$
Matching coefficients on both sides, we have with
$i=i_2+j_2-k_2$ and $j=j_2-k_2$,
\begin{equation}\label{ck2neg}
c_{k_1,-k_2}=\sum_{i=0}^{\infty}\frac{1}{r^{2i+k_1+k_2}}\sum_{j=0}^{i}\frac{(i+k_1)!}{j!(i+k_1-j)!}\frac{(i+k_2)!}{(j+k_2)!(i-j)!}.
\end{equation}
 Substitution of the identities \eqref{kkdent}, \eqref{iident}, and \eqref{kident} into the sum on $j$ gives,
\begin{align*}
\sum_{j=0}^i\frac{(i+k_1)!}{(i+k_1-j)!}\frac{1}{j!(i-j)!}\frac{(i+k_2)!}{(j+k_2)!}&=\frac{(i+k_2)!}{i!k_2!}\sum_{j=0}^i\frac{(-i)_j(-k_1-i)_j}{j!(k_2+1)_j}\\&=\frac{(i+k_2)!}{i!k_2!}\frac{(k_1+k_2+i+1)_i}{(k_2+1)_i}\\&=\frac{1}{i!}4^i\frac{\left(\frac{k_1+k_2}{2}+1\right)_i\left(\frac{k_1+k_2+1}{2}\right)_i}{(k_1+k_2+1)_i} .
\end{align*}
The second to last equality is obtained from the Chu-Vandermonde formula \eqref{chu} and the last equality uses \eqref{k1k2ident}. Substitution of the above result into the equation for $c_{k_1,-k_2}$ yields

\begin{equation}\label{ck2negf}
c_{k_1,-k_2}=\frac{1}{r^{k_1+k_2}} \ \hypergeom{2}{1}{\frac{k_1+k_2+1}{2},\frac{k_1+k_2}{2}+1}{k_1+k_2+1}{\frac{4}{r^2}}.
\end{equation}
In general we have the following formula (see, e.g., \cite[Section 8.9]{BW})
\begin{equation}\label{quadra} 
\hypergeom{2}{1}{a, b}{ a + b - \frac12}{ x} = \frac{1}{\sqrt{1 - x}} \left( \frac{1 + \sqrt{1 - x}}{2}\right)^{1 - 2 a} \hypergeom{2}{1}{2 a - 1, a - b + \frac12}{ a + b - \frac12 }{ \frac{\sqrt{1 - x} - 1}{\sqrt{1 - x} + 1} }.
\end{equation}
Applying this to \eqref{ck2negf} we get
\begin{equation} c_{k_1,-k_2}  = \frac{1}{r^{k_1+k_2}}\frac{1}{\sqrt{1 - \frac{4}{r^2}}} \left( \frac{1 + \sqrt{1 - \frac{4}{r^2}}}{2}\right)^{-k_1-k_2} \hypergeom{2}{1}{k_1+k_2, 0}{ k_1+k_2+1 }{\frac{\sqrt{1 - \frac{4}{r^2}} - 1}{\sqrt{1 - \frac{4}{r^2}} + 1} } = \nonumber \end{equation}
\begin{equation} \frac{1}{r^{k_1+k_2}}\frac{1}{\sqrt{1 - \frac{4}{r^2}}} \left( \frac{1 + \sqrt{1 - \frac{4}{r^2}}}{2}\right)^{-k_1-k_2} = \frac{1}{\sqrt{1 - \frac{4}{r^2}}}\left( \frac{r}{2} -\sqrt{\frac{r^2}{4}-1} \right)^{k_1+k_2}, \nonumber \end{equation}
where we have used
$$ \frac{2}{r}\frac{1}{1+\sqrt{1-\frac{4}{r^2}}} =  \frac{2}{r}\ \frac{1-\sqrt{1-\frac{4}{r^2}}}{1-\left(1-\frac{4}{r^2}\right)}=\frac{r}{2} -\sqrt{\frac{r^2}{4}-1} .$$
\end{proof}

%

Below we give alternate expressions for ${}_3F_2$. These formulas, which do not seem to appear in the literature, were inspired by the quadratic transformation formulas for ${}_3F_2$, such as \cite[Equation 3.1.15]{AAR}.
The hope was that with the use of such a quadratic transformation a numerator parameter of the resulting hypergeometric function would match with a denominator parameter, and thus reduce to a ${}_2F_1$. As we will see, this hope indeed materializes after some prior and post computations.

\begin{theorem}\label{3f22f1} The following identities hold.
\begin{itemize}
\item[1.]
\begin{equation}\label{ck1ge}
\hypergeom{3}{2}{1,\frac{k_1+k_2}{2}+1,\frac{k_1+k_2+1}{2}}{k_1+1,k_2+1}{x} =  \frac{k_1! k_2!}{(k_1+k_2)!}\frac{2^{k_1+k_2}}{x^{k_2}}\frac{(1+\sqrt{1-x})^{k_2-k_1}}{\sqrt{1-x}} + \end{equation}

$$\frac{k_2}{(k_1+k_2) \sqrt{1-x}} \left[ \hypergeom{2}{1}{1,k_1+k_2}{k_1+1}{ \frac12 -\frac12 \sqrt{1-x}} -\hypergeom{2}{1}{1,k_1+k_2}{k_1+1}{ \frac12 +\frac12 \sqrt{1-x}} \right].  
$$
%
\item[2.]
\begin{align}\label{ck1ge2}
&\hypergeom{3}{2}{1,\frac{k_1+k_2}{2}+1,\frac{k_1+k_2+1}{2}}{k_1+1,k_2+1}{x}=\nonumber\\&  \frac{1}{(k_1+k_2)\sqrt{1-x}}\left[k_2\  \hypergeom{2}{1}{1,k_1+k_2}{k_1+1}{ \frac12 -\frac12 \sqrt{1-x}}+ k_1\ \hypergeom{2}{1}{1,k_1+k_2}{k_2+1}{ \frac12 -\frac12 \sqrt{1-x}} \right].
\end{align}

\end{itemize}
\end{theorem}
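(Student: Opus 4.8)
Write $m=k_1+k_2$ and let $G(x)=\hypergeom{3}{2}{1,\frac{m}{2}+1,\frac{m+1}{2}}{k_1+1,k_2+1}{x}$ denote the common left-hand side. The key first step is to recognize $G$ as a diagonal generating function: since $(1)_n=n!$, $\bigl(\tfrac{m+1}{2}\bigr)_n\bigl(\tfrac{m+2}{2}\bigr)_n=(m+1)_{2n}/4^n$, and $(m+1)_{2n}/\bigl[(k_1+1)_n(k_2+1)_n\bigr]=\tfrac{k_1!\,k_2!}{m!}\binom{m+2n}{k_2+n}$, one obtains
$$G(x)=\sum_{n\ge 0}\frac{(m+1)_{2n}}{(k_1+1)_n(k_2+1)_n}\Bigl(\frac{x}{4}\Bigr)^n=\frac{k_1!\,k_2!}{m!}\sum_{n\ge 0}\binom{m+2n}{k_2+n}\Bigl(\frac{x}{4}\Bigr)^n ,$$
which may also be read off directly from \eqref{c000}--\eqref{ck1k2}. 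It suffices to verify the asserted identities for $x\in(0,1)$, since both sides are analytic on $|x|<1$.

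I would then evaluate this central-binomial sum by residues. Writing $\binom{m+2n}{k_2+n}=[u^{k_2+n}](1+u)^{m+2n}$, summing the geometric series in $n$ (legitimate on $|u|=1$, where $|z(1+u)^2/u|=|z||1+u|^2\le 4|z|=x<1$ for $z=x/4$), one gets
$$\sum_{n\ge 0}\binom{m+2n}{k_2+n}z^n=\frac{1}{2\pi i}\oint_{|u|=1}\frac{(1+u)^m}{u^{k_2}\bigl(u-z(1+u)^2\bigr)}\,du .$$
The quadratic $u-z(1+u)^2$ has reciprocal roots $\tau_\pm=\dfrac{1-2z\pm\sqrt{1-4z}}{2z}$ with $0<\tau_-<1<\tau_+$, so the integral equals $\operatorname{Res}_{u=0}+\operatorname{Res}_{u=\tau_-}$. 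Because $1-2z(1+\tau_-)=\sqrt{1-4z}$, the residue at $\tau_-$ equals $(1+\tau_-)^m/\bigl(\tau_-^{k_2}\sqrt{1-4z}\bigr)$, and, using $1+\tau_-=\frac{2}{1+\sqrt{1-x}}$ and $\tau_-=\frac{1-\sqrt{1-x}}{1+\sqrt{1-x}}$, its contribution $\tfrac{k_1!k_2!}{m!}(1+\tau_-)^m/\bigl(\tau_-^{k_2}\sqrt{1-x}\bigr)$ to $G(x)$ simplifies to precisely the first summand of \eqref{ck1ge}, which I abbreviate $E(x)$. The residue at $u=0$ (a pole of order $k_2$) is extracted from the partial fraction $\tfrac{1}{u-z(1+u)^2}=\tfrac{1}{\sqrt{1-4z}}\bigl(\tfrac{1}{u-\tau_-}-\tfrac{1}{u-\tau_+}\bigr)$ together with $\tfrac{1}{u-\tau_\pm}=-\sum_{p\ge0}u^p\tau_\pm^{-p-1}$; it comes out as a combination of the truncated binomial sums $\sum_{q=0}^{k_2-1}\binom{m}{q}\tau_\mp^{\,q}$.

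Reassembling, and using the elementary rewriting $\sum_{i=0}^{k_2}\binom{m}{k_1+i}\tau^i=\binom{m}{k_1}\hypergeom{2}{1}{1,-k_2}{k_1+1}{-\tau}$ (just converting binomial coefficients to Pochhammer ratios) together with $\tau_+=1/\tau_-$, the residue at $u=0$ collapses so that
$$G(x)=E(x)+\frac{1}{\sqrt{1-x}}\left[\hypergeom{2}{1}{1,-k_2}{k_1+1}{-\tau_-}-\hypergeom{2}{1}{1,-k_2}{k_1+1}{-\tau_+}\right].$$
It then remains to adjust parameters. Pfaff's transformation \eqref{Pfaff} gives $(1+\tau_\mp)\hypergeom{2}{1}{1,1-k_2}{k_1+1}{-\tau_\mp}=\hypergeom{2}{1}{1,k_1+k_2}{k_1+1}{\frac12\mp\frac12\sqrt{1-x}}$, and the Gauss-type contiguous identity
$$(k_1+k_2)\,\hypergeom{2}{1}{1,-k_2}{k_1+1}{w}=k_2(1-w)\,\hypergeom{2}{1}{1,1-k_2}{k_1+1}{w}+k_1$$
(checkable coefficient by coefficient, or extractable from \eqref{cont3}) bridges the parameter $-k_2$ produced by the residues and the parameter $k_1+k_2$ appearing in the statement; combining these turns the preceding display into \eqref{ck1ge}. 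For \eqref{ck1ge2} I would either run the same computation with the normalization $\binom{m+2n}{k_1+n}$ instead (this symmetrizes the bookkeeping and yields the two-term form directly), or deduce \eqref{ck1ge2} from \eqref{ck1ge} via the single rational identity $\tfrac{k_1!k_2!}{m!}\,y^{-k_2}(1-y)^{-k_1}=\tfrac{k_2}{m}\hypergeom{2}{1}{1,m}{k_1+1}{1-y}+\tfrac{k_1}{m}\hypergeom{2}{1}{1,m}{k_2+1}{y}$ with $y=\tfrac12-\tfrac12\sqrt{1-x}$, which follows from partial fractions after a Pfaff transformation of each factor.

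The conceptual content — seeing the ${}_3F_2$ as a central-binomial diagonal, hence algebraic up to two explicit ${}_2F_1$ corrections — is short; the real work, and the main obstacle, is organizational: tracking the branch of $\sqrt{1-x}$ and which root of $u-z(1+u)^2$ lies inside $|u|=1$, and, above all, matching the truncated binomial sums coming out of the residues with the precise numerator/denominator parameters and arguments of the ${}_2F_1$'s in \eqref{ck1ge}--\eqref{ck1ge2} — this is exactly where the Pfaff transformation and the contiguous identity above are needed, together with the extra rational identity relating the two parts. As the remarks preceding the theorem indicate, one may instead bypass the generating-function route and apply a quadratic transformation for ${}_3F_2$ directly: the two numerator parameters $\frac{m+2}{2}$ and $\frac{m+1}{2}$ differing by exactly $\tfrac12$ is precisely the hypothesis that makes such a transformation available (possibly after a contiguous-relation pre-adjustment), after which a numerator parameter collides with a denominator parameter, the ${}_3F_2$ degenerates to a ${}_2F_1$, and analogous post-processing using \eqref{cont3} and \eqref{Pfaff} finishes the argument.
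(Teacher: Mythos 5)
Your proposal is correct, but it follows a genuinely different route from the paper's. The paper proves \eqref{ck1ge} entirely within classical hypergeometric transformation theory: the contiguous relations \eqref{cont1}--\eqref{cont2} rearrange the ${}_3F_2$, the quadratic transformation \cite[Equation 3.1.15]{AAR} removes the half-integer numerator parameters, the resulting well-poised ${}_3F_2$'s are split into ${}_2F_1$'s via \eqref{cont1}, and then Pfaff \eqref{Pfaff}, the $x\leftrightarrow 1-x$ connection formula \cite[Corollary 2.3.3]{AAR} (applied for noninteger $k_1$ and then pushed to the limit, since that formula degenerates at integer parameter differences), and \eqref{cont3} finish; Part 2 follows from Part 1 via \eqref{recur3}. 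You instead read the ${}_3F_2$ as the binomial generating function $\frac{k_1!k_2!}{(k_1+k_2)!}\sum_{n}\binom{k_1+k_2+2n}{k_2+n}(x/4)^n$ (consistent with the binomial form of the coefficients, cf.\ \eqref{c000} and \eqref{c0000}) and evaluate it by residues: the simple pole at $\tau_-=\frac{1-\sqrt{1-x}}{1+\sqrt{1-x}}$ inside the unit circle yields exactly the algebraic first summand of \eqref{ck1ge}, the order-$k_2$ pole at $u=0$ yields the polynomial corrections ${}_2F_1(1,-k_2;k_1+1;-\tau_\mp)$, and your Pfaff step plus the coefficient-checkable contiguous identity convert their difference into the ${}_2F_1(1,k_1+k_2;k_1+1;\tfrac12\mp\tfrac12\sqrt{1-x})$ difference, the additive constants cancelling. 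I verified all your stated intermediate formulas (the generating-function identification, the roots $\tau_\pm$ and the two residues, the intermediate identity for $G(x)$, the Pfaff conversion, the bridging contiguous relation, and the rational identity used to pass from \eqref{ck1ge} to \eqref{ck1ge2}); they are correct, and in fact your final rational identity is exactly the content of the paper's \eqref{recur3} (with its left-hand prefactor read as $k_1/(x(k_1+k_2))$), so the two treatments of Part 2 coincide in substance. What your route buys: it is more elementary and self-contained (residue calculus, Pfaff, one elementary contiguous relation), it avoids both the ${}_3F_2$ quadratic transformation and the delicate noninteger-$k_1$ limit in the connection formula, and it exposes the combinatorial meaning of the identity. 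What the paper's route buys: it stays within standard tabulated transformations and produces intermediate formulas (the well-poised ${}_3F_2$ form, \eqref{recur3}) that are reused later in the asymptotics. Two small points to fix in a full write-up: the truncated sums coming from the residue at $u=0$ are $\sum_{q=0}^{k_2-1}\binom{k_1+k_2}{q}\tau_\mp^{\,k_2-q}$, not $\sum_{q=0}^{k_2-1}\binom{k_1+k_2}{q}\tau_\mp^{\,q}$ (your displayed collapsed formula is nevertheless the correct one), and the claim that the right-hand side of \eqref{ck1ge} is analytic on $|x|<1$ needs a word at $x=0$; since the application only uses $x=4/r^2\in(0,1)$, establishing the identity on $(0,1)$ suffices in any case.
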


\begin{proof} 
Using relation~\eqref{cont1} with $b=k_2+1$, $a=\frac{k_1+k_2+1}{2}$, $a_2=1$, $a_3=\frac{k_1+k_2}{2}+1$ and $b_2=k_1+1$, we find
\begin{align*}
&(k_2+1)\hypergeom{3}{2}{1,\frac{k_1+k_2}{2}+1,\frac{k_1+k_2+1}{2}}{k_1+1,k_2+1}{x} -\frac{k_1+k_2+1}{2}\hypergeom{3}{2}{1,\frac{k_1+k_2}{2}+1,\frac{k_1+k_2+3}{2}}{k_1+1,k_2+2}{x}\nonumber\\&+\frac{k_1-k_2-1}{2}\hypergeom{3}{2}{1,\frac{k_1+k_2}{2}+1,\frac{k_1+k_2+1}{2}}{k_1+1,k_2+2}{x}=0.
\end{align*}
Now use \eqref{cont2} to eliminate $\hypergeom{3}{2}{1,\frac{k_1+k_2}{2}+1,\frac{k_1+k_2+3}{2}}{k_1+1,k_2+2}{x}$ with $a=0$, $b=\frac{k_1+k_2+1}{2}$, $c=\frac{k_1+k_2}{2}$, $d=k_2+1$ and $e=k_1$ to find
\begin{align*}
&\hypergeom{3}{2}{1,\frac{k_1+k_2}{2}+1,\frac{k_1+k_2+1}{2}}{k_1+1,k_2+1}{x}= -\frac{k_1-k_2-1}{2(k_2+1)}\hypergeom{3}{2}{1,\frac{k_1+k_2}{2}+1,\frac{k_1+k_2+1}{2}}{k_1+1,k_2+2}{x}\nonumber\\&-\frac{2k_1}{(k_1+k_2)x}
+\frac{2k_1}{(k_1+k_2)x}\,\hypergeom{3}{2}{1,\frac{k_1+k_2+1}{2},\frac{k_1+k_2}{2}}{k_1,k_2+1}{x}.
\end{align*}
Another application of \eqref{cont1} with $b=k_1$ and $a=\frac{k_1+k_2}{2}$ to eliminate the 2nd hypergeometric function on the right hand side of the above equation yields
\begin{align*}
&\left(1-\frac{1}{x}\right)\hypergeom{3}{2}{1,\frac{k_1+k_2}{2}+1,\frac{k_1+k_2+1}{2}}{k_1+1,k_2+1}{x}= -\frac{k_1-k_2-1}{2(k_2+1)}\hypergeom{3}{2}{1,\frac{k_1+k_2}{2}+1,\frac{k_1+k_2+1}{2}}{k_1+1,k_2+2}{x}\nonumber\\&-\frac{2k_1}{(k_1+k_2)x}
+\frac{k_1-k_2}{(k_1+k_2)x}\,\hypergeom{3}{2}{1,\frac{k_1+k_2+1}{2},\frac{k_1+k_2}{2}}{k_1+1,k_2+1}{x}.
\end{align*}
Both ${}_3F_2$'s on the right hand side of the above equation allow the use of the quadratic identity \cite[equation~(3.1.15)]{AAR} with the change of variables $x \to -\frac{4x}{(1-x)^2}$, $a\to 2a$, $c\to d-b$ and $a-b-1=d$ yields
\begin{align}\label{d3f2}
&\left(1-\frac{1}{x}\right)\hypergeom{3}{2}{1,\frac{k_1+k_2}{2}+1,\frac{k_1+k_2+1}{2}}{k_1+1,k_2+1}{x}= -\frac{2k_1}{(k_1+k_2)x}\nonumber\\&-\frac{k_1-k_2-1}{2(k_2+1)}\left(\frac{2}{x}(1-\sqrt{1-x})\right)^{k_1+k_2+1}\hypergeom{3}{2}{k_1+k_2+1,k_1,k_2+1}{k_1+1,k_2+2}{1-\frac{2}{x}(1-\sqrt{1-x})}\nonumber\\&+\frac{k_1-k_2}{(k_1+k_2)x}\left(\frac{2}{x}(1-\sqrt{1-x})\right)^{k_1+k_2}\hypergeom{3}{2}{k_1+k_2,k_1,k_2}{k_1+1,k_2+1}{1-\frac{2}{x}(1-\sqrt{1-x})}.
\end{align}
Note that the factor of $\frac12$ has been eliminated from the numerator parameters in the hypergeometric functions on the right hand side of the above equation which is why the quadratic transformation is so useful. It is also perhaps worth noting that these hypergeometric functions are easily seen, after interchanging the denominator parameters, as being well poised (see \cite[Definition~3.3.2]{AAR}). Relation \eqref{cont1} with $a=k_1$, $a_2=k_2+1$, $a_3=k_1+k_2+1$, $b=k_2+1$ and $b_2=k_1+1$, can be used to reduce  these functions to  a combination of ${}_2 F_1$'s, as follows for the first hypergeometric function on the left hand side,
\begin{align}\label{prepfaff}
&\hypergeom{3}{2}{k_1+k_2+1,k_1,k_2+1}{k_1+1,k_2+2}{x}\nonumber\\&= \frac{k_1}{k_1-k_2-1}\,\hypergeom{2}{1}{k_1+k_2+1,k_2+1}{k_2+2}{x}-\frac{k_2+1}{k_1-k_2-1}\,\hypergeom{2}{1}{k_1+k_2+1,k_1}{k_1+1}{x}.
\end{align}
Applying the Pfaff transformation \eqref{Pfaff} 
to \eqref{prepfaff}
with $x=1-\frac{2}{x}(1-\sqrt{1-x})$ and $a=k_1+k_2+1$, and using
the fact that $\frac{1-\frac{2}{x}(1-\sqrt{1-x})}{-\frac{2}{x}(1-\sqrt{1-x})}=\frac{1-\sqrt{1-x}}{2}$,
yields
\begin{align*}
&\left(\frac{2(1-\sqrt{1-x})}{x}\right)^{k_1+k_2+1}\hypergeom{3}{2}{k_1+k_2+1,k_1,k_2+1}{k_1+1,k_2+2}{1-\frac{2}{x}(1-\sqrt{1-x})}\nonumber\\&= \frac{k_1}{k_1-k_2-1}\,\hypergeom{2}{1}{1,k_1+k_2+1}{k_2+2}{\frac{1-\sqrt{1-x}}{2}}-\frac{k_2+1}{k_1-k_2-1}\,\hypergeom{2}{1}{1,k_1+k_2+1}{k_1+1}{\frac{1-\sqrt{1-x}}{2}}.
\end{align*}
Substituting the above equation into equation~\eqref{d3f2} and also a similar equation with $k_2\to k_2-1$
in the second hypergeometric function on the right hand side of equation~\eqref{d3f2} gives,
\begin{align}\label{newrelp}
&\left(1-\frac{1}{x}\right)\hypergeom{3}{2}{1,\frac{k_1+k_2}{2}+1,\frac{k_1+k_2+1}{2}}{k_1+1,k_2+1}{x}= -\frac{2k_1}{(k_1+k_2)x}\nonumber\\&-\frac{k_1}{2(k_2+1)}\ \hypergeom{2}{1}{1,k_1+k_2+1}{k_2+2}{\frac{1-\sqrt{1-x}}{2}}+\frac{1}{2}\ \hypergeom{2}{1}{1,k_1+k_2+1}{k_1+1}{\frac{1-\sqrt{1-x}}{2}}
\nonumber\\& +\frac{k_1}{x(k_1+k_2)}\ \hypergeom{2}{1}{1,k_1+k_2}{k_2+1}{\frac{1-\sqrt{1-x}}{2}}-\frac{k_2}{x(k_1+k_2)}\ \hypergeom{2}{1}{1,k_1+k_2}{k_1+1}{\frac{1-\sqrt{1-x}}{2}}.
\end{align}
Assume for the moment that $k_1\ge0$ but not an integer. The relations between hypergeometric functions in $x$ and $1-x$ (see \cite[Corollary 2.3.3]{AAR}) shows that
\begin{align*}
&\frac{k_1}{2(k_2+1)}\ \hypergeom{2}{1}{1,k_1+k_2+1}{k_2+2}{\frac{1-\sqrt{1-x}}{2}}\\&=-\frac{1}{2}\ \hypergeom{2}{1}{1,k_1+k_2+1}{k_1+1}{\frac{1+\sqrt{1-x}}{2}}+\frac{k_1! k_2!}{(k_1+k_2)!}\frac{2^{k_1+k_2}}{x^{k_2+1}}\left(1+\sqrt{1-x}\right)^{-k_1+k_2+1}.
\end{align*}
Now take limits of $k_1$ to a nonnegative integer.
Also
\begin{align}\label{recur3}
&\frac{k_1}{x(k_2+1)}\ \hypergeom{2}{1}{1,k_1+k_2}{k_2+1}{\frac{1-\sqrt{1-x}}{2}}\\&=-\frac{k_2}{x(k_1+k_2)}\ \hypergeom{2}{1}{1,k_1+k_2}{k_1+1}{\frac{1+\sqrt{1-x}}{2}}+\frac{k_1! k_2!}{(k_1+k_2)!}\frac{2^{k_1+k_2}}{x^{k_2+1}}\left(1+\sqrt{1-x}\right)^{-k_1+k_2}.
\end{align}
Substitution of the above two equations into equation~\eqref{newrelp} and simplification yields
\begin{align*}
&\hypergeom{3}{2}{1,\frac{k_1+k_2}{2}+1,\frac{k_1+k_2+1}{2}}{k_1+1,k_2+1}{x}= \frac{2k_1}{(k_1+k_2)(1-x)}+\frac{k_1! k_2!}{(k_1+k_2)!}\frac{2^{k_1+k_2}}{x^{k_2}}\frac{\left(1+\sqrt{1-x}\right)^{-k_1+k_2}}{\sqrt{1-x}}\\&-\frac{x}{2(1-x)}\left(\hypergeom{2}{1}{1,k_1+k_2+1}{k_1+1}{\frac{1-\sqrt{1-x}}{2}}+\hypergeom{2}{1}{1,k_1+k_2+1}{k_1+1}{\frac{1+\sqrt{1-x}}{2}}\right)\\&+\frac{k_2}{(k_1+k_2)(1-x)}\left(\hypergeom{2}{1}{1,k_1+k_2}{k_1+1}{\frac{1-\sqrt{1-x}}{2}}+\hypergeom{2}{1}{1,k_1+k_2}{k_1+1}{\frac{1+\sqrt{1-x}}{2}}\right) .
\end{align*}
Relation~\eqref{cont3} with $a=k_1+k_2$, $b=1$ and $c=k_1+1$, gives
\begin{align*}
&\hypergeom{2}{1}{1,k_1+k_2+1}{k_1+1}{\frac{1\pm\sqrt{1-x}}{2}}\\&=\frac{2k_1}{k_1+k_2}\frac{1\pm\sqrt{1-x}}{x}+\frac{2k_2}{k_1+k_2}\frac{1\pm\sqrt{1-x}}{x}\hypergeom{2}{1}{1,k_1+k_2}{k_1+1}{\frac{1\pm\sqrt{1-x}}{2}}
\end{align*}
 which is used to eliminate the hypergeometric functions containing $k_1+k_2+1$ and give \eqref{ck1ge}.

For Part 2, use equation~\eqref{recur3} multiplied by $x$ to eliminate the second hypergeometric function on the right hand side of \eqref{ck1ge}.
\end{proof}

The above formulas allow other identities which are interesting in their own right.

\begin{theorem}
The following equalities hold.
\begin{itemize}
\item[1.]
\begin{equation}\label{euler2}
\hypergeom{3}{2}{1,\frac{k_1+k_2}{2}+1,\frac{k_1+k_2+1}{2}}{k_1+1,k_2+1}{x}= \frac{k_1! k_2!}{(k_1+k_2)!}\frac{2^{k_1+k_2}}{x^{k_2}}\frac{(1+\sqrt{1-x})^{k_2-k_1}}{\sqrt{1-x}} \ -
\end{equation}
$$4\frac{k_1 k_2}{(k_1+k_2-1)(k_1+k_2)x}\;\hypergeom{3}{2}{1,-k_2+1,-k_1+1}{\frac{-k_1-k_2}{2}+1,\frac{-k_1-k_2+3}{2}}{\frac{1}{x}}.$$
\item[2.]
\begin{equation}\label{corst}
\hypergeom{3}{2}{1,-k_2+1,-k_1+1}{\frac{-k_1-k_2}{2}+1,\frac{-k_1-k_2+3}{2}}{\frac{1}{x}} = \end{equation}

$$\frac{k_1+k_2-1}{4k_1}\frac{x}{ \sqrt{1-x}} \left[ \hypergeom{2}{1}{1,k_1+k_2}{k_1+1}{ \frac12 -\frac12 \sqrt{1-x}} -\hypergeom{2}{1}{1,k_1+k_2}{k_1+1}{ \frac12 +\frac12 \sqrt{1-x}} \right].  
$$
\end{itemize}
\end{theorem}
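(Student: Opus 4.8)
The plan is to prove Part~2 first and then read Part~1 off from it together with \eqref{ck1ge}. Note that \eqref{corst}, after multiplying out, is an identity of the shape ``(difference of two ${}_2F_1$'s with arguments $\tfrac12\mp\tfrac12\sqrt{1-x}$) $=$ (rational function of $x$ and $\sqrt{1-x}$) $\times$ (terminating ${}_3F_2$ in $1/x$)'', i.e.\ a quadratic transformation connecting the two branches $\tfrac12\pm\tfrac12\sqrt{1-x}$ of a ${}_2F_1$ to a single hypergeometric function in the reciprocal variable. Granting \eqref{corst}, equation \eqref{euler2} then follows at once: solve \eqref{corst} for the bracketed ${}_2F_1$ difference and substitute into \eqref{ck1ge}. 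The algebraic term $\frac{k_1!k_2!}{(k_1+k_2)!}\frac{2^{k_1+k_2}}{x^{k_2}}\frac{(1+\sqrt{1-x})^{k_2-k_1}}{\sqrt{1-x}}$ is carried through unchanged, and the coefficient $\frac{k_2}{(k_1+k_2)\sqrt{1-x}}$ of the difference in \eqref{ck1ge} multiplies the prefactor $\frac{k_1+k_2-1}{4k_1}\frac{x}{\sqrt{1-x}}$ from \eqref{corst} to produce exactly the coefficient $\frac{4k_1k_2}{(k_1+k_2-1)(k_1+k_2)x}$ that appears in \eqref{euler2} (the sign being determined by this same computation).

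To prove Part~2 I would use the machinery already deployed for Theorem~\ref{3f22f1}. The two lower parameters of the ${}_3F_2$ on the left of \eqref{corst}, namely $\tfrac{-k_1-k_2}{2}+1$ and $\tfrac{-k_1-k_2+3}{2}$, differ by $\tfrac12$, which is exactly the configuration for which the quadratic transformation \cite[equation~(3.1.15)]{AAR} is available. Applying it --- with the change of variables sending $1/x$ to an argument of the form $\tfrac12\pm\tfrac12\sqrt{1-x}$, using the relation $1-\tfrac2x(1-\sqrt{1-x})=\tfrac{\sqrt{1-x}-1}{\sqrt{1-x}+1}$ as in the derivation of \eqref{d3f2} --- turns the left side of \eqref{corst} into a combination of two ${}_2F_1$'s carrying half-integer numerator parameters; these half-integers are cleared one at a time with the contiguous relation \eqref{cont1}, after which Pfaff's transformation \eqref{Pfaff} and the contiguous relation \eqref{cont3} bring every parameter into the form $1,k_1+k_2;k_1+1$, exactly the sequence of steps that carries \eqref{prepfaff} to \eqref{newrelp}. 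Alternatively, and with fewer transformation-theoretic subtleties, one can verify \eqref{corst} directly by comparing the coefficients of $x^{-n}$ on the two sides: the left side is already a power series in $1/x$, the right side becomes one after expanding $(1-x)^{-1/2}$ and the two ${}_2F_1$'s, and the resulting identity between finite sums reduces to the Chu--Vandermonde identity \eqref{chu}, in the same style as the computations in the proof of Theorem~\ref{coeffi}.

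The step I expect to be the main obstacle is the bookkeeping inside the quadratic transformation: fixing the precise parameter substitution in \cite[(3.1.15)]{AAR} and the accompanying change of variables, and then threading the contiguous-relation and Pfaff steps so that all half-integer parameters cancel and the two surviving ${}_2F_1$'s are exactly $\hypergeom{2}{1}{1,k_1+k_2}{k_1+1}{\tfrac12-\tfrac12\sqrt{1-x}}$ and $\hypergeom{2}{1}{1,k_1+k_2}{k_1+1}{\tfrac12+\tfrac12\sqrt{1-x}}$, each attached to the correct branch of the square root and to the claimed prefactor $\frac{k_1+k_2-1}{4k_1}\frac{x}{\sqrt{1-x}}$; one must also check the boundary cases $k_1=1$ or $k_2=1$, where the ${}_3F_2$ collapses to the constant $1$. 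The coefficient-matching route trades this for the task of recognising the correct instance of Chu--Vandermonde after the two power-series expansions, which is routine but still the crux of that argument.
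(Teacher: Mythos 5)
Your reduction of one part to the other is fine: given \eqref{ck1ge}, equations \eqref{euler2} and \eqref{corst} are equivalent by a one-line rearrangement, and indeed the paper goes exactly this way, only in the opposite direction (it proves \eqref{euler2} independently and then obtains \eqref{corst} as the difference of \eqref{ck1ge} and \eqref{euler2}). The problem is that in your plan all of the substance is concentrated in the independent proof of Part~2, and neither of your two sketched routes holds up as described. Route~A: applying the quadratic transformation \cite[(3.1.15)]{AAR} (or contiguous relations, or Pfaff) to the \emph{terminating} ${}_3F_2$ in $1/x$ can only produce further terminating (finite) expressions, whereas the right-hand side of \eqref{corst} is a difference of two \emph{non-terminating} ${}_2F_1$'s whose infinite parts must cancel; the machinery of Theorem~\ref{3f22f1} never introduces a reciprocal-argument series, so some additional ingredient (e.g.\ a connection formula between solutions at $0$ and $\infty$) is needed, and your sketch does not identify it --- the ``bookkeeping'' you defer is precisely where the proof would have to live. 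Route~B is worse: expanding $(1-x)^{-1/2}$ and the two ${}_2F_1$'s gives series in powers of $x$ and of $\tfrac12\pm\tfrac12\sqrt{1-x}$, not a power series in $1/x$, so there are no coefficients of $x^{-n}$ to match; the fact that the bracketed combination times $x/\sqrt{1-x}$ is a polynomial in $1/x$ is a \emph{consequence} of the identity (the paper remarks on it after the statement), not something you can take as the starting point of a coefficient comparison.

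For contrast, the paper's independent proof of Part~1 is elementary and avoids all transformation-theoretic bookkeeping: by \eqref{quadra}, the algebraic first term on the right of \eqref{euler2} equals
$\frac{2^{2k_2}}{x^{k_2}\binom{k_1+k_2}{k_1}}\hypergeom{2}{1}{\frac{k_1-k_2}{2}+1,\frac{k_1-k_2+1}{2}}{k_1-k_2+1}{x}$,
whose power series in $x$, multiplied by $x^{-k_2}$, is split at $n=k_2$. The tail $n\ge k_2$, after the shift $n\to n+k_2$ and the identity $(a)_{n+k_2}=(a)_{k_2}(a+k_2)_n$, reproduces exactly the ${}_3F_2$ on the left of \eqref{euler2}; the head $n\le k_2-1$, after the substitution $n=k_2-1-m$ and reversal of the Pochhammer symbols, becomes the terminating ${}_3F_2$ in $1/x$ with the stated coefficient. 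Part~2 then follows by subtracting \eqref{ck1ge}. If you want to salvage your plan, the cleanest fix is to adopt this series-splitting argument (for either part); as it stands, your proposal leaves the key identity unproved.
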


\medskip

Note that in the second equality the difference between the two hypergeometric functions on the right hand side gives a polynomial in $\frac{1}{x}$ on the left.

\begin{proof} We note from \eqref{quadra} with $a=\frac{k_1-k_2+1}{2}$ and $b=\frac{k_1-k_2}{2}+1$ that,
$$
\hypergeom{2}{1}{\frac{k_1-k_2}{2}+1,\frac{k_1-k_2+1}{2}}{k_1-k_2+1}{x}=\frac{1}{\sqrt{1-x}}\left(\frac{\sqrt{1-x}+1}{2}\right)^{k_2-k_1}, 
$$
so the first term in the right hand side of \eqref{euler2} can be written as
$$ \frac{2^{2k_2}}{x^{k_2} {k_1 + k_2 \choose k_1}} \hypergeom{2}{1}{\frac{k_1-k_2}{2}+1,\frac{k_1-k_2+1}{2}}{k_1-k_2+1}{x} = 
 \frac{2^{2k_2}}{{k_1 + k_2 \choose k_1}} \sum_{n=0}^\infty \frac{\left(\frac{k_1-k_2}{2}+1\right)_n \left(\frac{k_1-k_2+1}{2}\right)_n}{(k_1-k_2+1)_n} \frac{x^{n-k_2}}{n!} . $$
Split the sum into the terms $n\ge k_2$ and $n\le k_2-1$ and consider the first case,
\begin{align*}
\sum_{n=k_2}^{\infty}\frac{\left(\frac{k_1-k_2}{2}+1\right)_n \left(\frac{k_1-k_2+1}{2}\right)_n}{(k_1-k_2+1)_n(1)_n}x^{n-k_2}&=\sum_{n=0}^{\infty}\frac{\left(\frac{k_1-k_2}{2}+1\right)_{n+k_2} \left(\frac{k_1-k_2+1}{2}\right)_{n+k_2}}{(k_1-k_2+1)_n(1)_{n+k_2}}x^{n}\\&=\frac{\left(\frac{k_1-k_2}{2}+1\right)_{k_2} \left(\frac{k_1-k_2+1}{2}\right)_{k_2}}{(k_1-k_2+1)_{k_2}(1)_{k_2}}\sum_{n=0}^{\infty}\frac{\left(\frac{k_1+k_2}{2}+1\right)_{n} \left(\frac{k_1+k_2+1}{2}\right)_{n}}{(k_1+1)_n(k_2+1)_{n}}x^{n}\\&=\frac{1}{2^{2k_2}}\frac{(k_1+k_2)!}{k_1! k_2!}\hypergeom{3}{2}{1,\frac{k_1+k_2}{2}+1,\frac{k_1+k_2+1}{2}}{k_1+1,k_2+1}{x},
\end{align*}
where the identity $(a)_{n+k_2}=(a)_{k_2}(a+k_2)_n$ has been used four times to obtain the second equality.
This leads to the term on the left hand side of equation~\eqref{euler2}. Consider now the remaining sum,
\begin{align}\label{2ndsum}
&\sum_{n=0}^{k_2-1}\frac{\left(\frac{k_1-k_2}{2}+1\right)_n \left(\frac{k_1-k_2+1}{2}\right)_n}{(k_1-k_2+1)_n(1)_n}x^{n-k_2}=\sum_{m=0}^{k_2-1}\frac{\left(\frac{k_1-k_2}{2}+1\right)_{k_2-m-1} \left(\frac{k_1-k_2+1}{2}\right)_{k_2-m-1}}{(k_1-k_2+1)_{k_2-m-1}(1)_{k_2-m-1}}x^{-m-1}\nonumber\\&=\frac{\left(\frac{k_1-k_2}{2}+1\right)_{k_2-1} \left(\frac{k_1-k_2+1}{2}\right)_{k_2-1}}{(k_1-k_2+1)_{k_2-1}(1)_{k_2-1}}\sum_{m=0}^{k_2-1}\frac{(-k_2+1)_{m}(-k_1+1)_{m}}{(\frac{k_1-k_2}{2}+1)_{m}(\frac{-k_1-k_2+3}{2})_{m}}x^{-m-1},
\end{align}
where the change of variables $n-k_2=-m-1$ was used to obtain the first equality and the identities
$$(1)_{k_2-1-m}=(-1)^m \frac{(k_2-1)!}{(-k_2+1)_m} ,\quad
(k_1-k_2+1)_{k_2-1-m}=(-1)^m\frac{(k_1-k_2+1)_{k_2-1}}{(-k_1+1)_m},
$$
$$
\left(\frac{k_1-k_2}{2}+1\right)_{k_2-1-m}=(-1)^m\frac{\left(\frac{k_1-k_2}{2}+1\right)_{k_2-1}}{\left(\frac{-k_1-k_2}{2}+1\right)_m},
$$ 
and
$$
\left(\frac{k_1-k_2+1}{2}\right)_{k_2-1-m}=(-1)^m\frac{\left(\frac{k_1-k_2+1}{2}\right)_{k_2-1}}{\left(\frac{-k_1-k_2+3}{2}\right)_m},
$$
were used to obtain the second equality.
Again routine manipulations give
$$
\frac{\left(\frac{k_1-k_2}{2}+1\right)_{k_2-1} \left(\frac{k_1-k_2+1}{2}\right)_{k_2-1}}{(k_1-k_2+1)_{k_2-1}(1)_{k_2-1}}=\frac{1}{2^{2k_2-2}}\frac{(k_1+k_2-2)!}{(k_1-1)!(k_2-1)!}
$$
which when used in equation \eqref{2ndsum} leads to the second term on the right hand side of \eqref{euler2}.

To prove \eqref{corst}, simplify take the difference of equations \eqref{ck1ge} and \eqref{euler2}.
\end{proof}

\section{Asymptotics}
\label{sec:Asymptotics}

The asymptotics of the Fourier coefficients of $c_{tk_1, tk_2}$ as $t\to \infty$ when $k_1k_2\le 0$ is clear from Theorem ~\ref{coeffi} above:
$$ \lim_{t\to \infty} \frac{c_{tk_1, tk_2}}{ \left(\frac{r}{2}-\sqrt{\frac{r^2}{4}-1}\right)^{t(|k_1|+|k_2|)}} = \frac{1}{\sqrt{1-\frac{4}{r^2}}} . $$
This exponential decay of $c_{tk_1, tk_2}$ was already established in \cite[Theorem 2.2.1]{GWAnnals} (in particular, equation (2.2.2)), where the asymptotics were given via the common roots of $p$ and $\overleftarrow{p}$ (recall that $\overleftarrow{p} (z_1, z_2  ):= z_1^{n_1} z_2^{n_2} \overline{p} (1/z_1, 1/z_2 )$). In this case, if we solve
$$ 1-\frac{z_1+z_2}{r} = p(z_1,z_2)= 0 = \overleftarrow{p} (z_1,z_2) =z_1z_2-\frac{z_1+z_2}{r},$$ we obtain the two solutions
$$ (z_1,z_2) = \left(\frac{r}{2}-\sqrt{\frac{r^2}{4}-1} , \frac{r}{2}+\sqrt{\frac{r^2}{4}-1}\right) , (z_1,z_2) = \left(\frac{r}{2}+\sqrt{\frac{r^2}{4}-1} , \frac{r}{2}-\sqrt{\frac{r^2}{4}-1}\right). $$
Thus $\frac{r}{2}-\sqrt{\frac{r^2}{4}-1}$ is the component of the intersecting zeros that lies in the open unit disk.


We now turn to the asymptotics for the case when $k_1k_2 >0$.

\begin{theorem}\label{asymt} Let $c_{k_1,k_2}$ be defined as in Theorem \ref{coeffi}.
For $k_1\ge  k_2>0$  the radial asymptotics of these Fourier coefficients are given by
\begin{equation}\label{th4-1}\lim_{t\to\infty}\sqrt{t}\left(\frac{r^{k_1+k_2} k_1^{k_1} k_2^{k_2} }{(k_1+k_2)^{k_1+k_2}} \right)^t  c_{tk_1, tk_2} =\frac{1}{\sqrt{2\pi}} \sqrt{\frac{k_1+k_2}{ k_1 k_2 }}\frac{1}{1-\frac{(k_1+k_2)^2}{r^2k_1k_2}} , \ \ \ \ \ \text{when} \ \frac{(k_1+k_2)^2}{r^2 k_1 k_2 } <1, \end{equation}
\begin{equation}\label{th4-2} \lim_{t\to\infty}\frac{c_{tk_1, tk_2}}{ \left(\frac{r}{2}-\sqrt{\frac{r^2}{4}-1}\right)^{t(k_1-k_2)}}  = \frac{1} {2\sqrt{1-\frac{4}{r^2}}}, \ \ \ \ \ \text{when} \ \ \frac{(k_1+k_2)^2}{r^2 k_1 k_2 } =1 , \end{equation}
and
\begin{equation}\label{th4-3} \lim_{t\to\infty}\frac{c_{tk_1, tk_2}}{\left(\frac{r}{2}-\sqrt{\frac{r^2}{4}-1}\right)^{t(k_1-k_2)}} = \frac{ 1} {\sqrt{1-\frac{4}{r^2}}}, \ \ \ \ \ \text{when} \ \ \frac{(k_1+k_2)^2}{r^2 k_1 k_2 } >1. \end{equation}
In the first limit convergence is uniform on compact subsets of $r>2,\ \frac{(k_1+k_2)^2}{r^2 k_1 k_2 } <1$ while in the third limit the convergence is uniform for on compact subsets of $r>2,\ \frac{(k_1+k_2)^2}{r^2 k_1 k_2 } >1$.
\end{theorem}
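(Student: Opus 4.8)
The plan is to combine the closed form of Theorem~\ref{coeffi} for $k_1k_2>0$ with the reduction to Gauss hypergeometric functions in part~2 of Theorem~\ref{3f22f1}, and then read off the large-$t$ behaviour by Laplace's method. Set $\beta=\tfrac r2-\sqrt{\tfrac{r^2}{4}-1}\in(0,1)$, so that $\beta+\beta^{-1}=r$, $\beta/r=\tfrac12-\tfrac12\sqrt{1-4/r^2}$, $1-2\beta/r=\sqrt{1-4/r^2}$, and $(\beta/r)(1-\beta/r)=1/r^2$. Substituting $x=4/r^2$ in \eqref{ck1ge2} and then replacing $(k_1,k_2)$ by $(tk_1,tk_2)$ turns the formula of Theorem~\ref{coeffi} into
\[ c_{tk_1,tk_2}=\frac{\binom{t(k_1+k_2)}{tk_1}}{r^{t(k_1+k_2)}}\cdot\frac{k_2\,G_t^{(1)}+k_1\,G_t^{(2)}}{(k_1+k_2)\sqrt{1-4/r^2}},\qquad G_t^{(\ell)}:=\hypergeom{2}{1}{1,\;t(k_1+k_2)}{tk_\ell+1}{\tfrac\beta r}. \]
Stirling's formula gives $\binom{t(k_1+k_2)}{tk_1}r^{-t(k_1+k_2)}\sim(2\pi t)^{-1/2}\sqrt{(k_1+k_2)/(k_1k_2)}\;\mu^t$ with $\mu:=(k_1+k_2)^{k_1+k_2}/\bigl(r^{k_1+k_2}k_1^{k_1}k_2^{k_2}\bigr)$, so the whole problem reduces to the asymptotics of $G_t^{(1)}$ and $G_t^{(2)}$.

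For those I would use the Euler integral representation, available because the lower parameter $tk_\ell+1$ exceeds the upper parameter $1$:
\[ G_t^{(\ell)}=tk_\ell\int_0^1(1-u)^{-1}e^{t\psi_\ell(u)}\,du,\qquad \psi_\ell(u)=k_\ell\log(1-u)-(k_1+k_2)\log\!\Bigl(1-\tfrac\beta r u\Bigr). \]
This is a Laplace integral on $[0,1]$ whose amplitude singularity at $u=1$ is harmless since $\psi_\ell(1)=-\infty$. The stationary point is $u_\ell^\ast=\dfrac{(k_1+k_2)\beta/r-k_\ell}{(\beta/r)(k_1+k_2-k_\ell)}$, and $\psi_\ell''<0$ there. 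Because $\tfrac{k_1+k_2}{k_1}\le2<r$ and $\beta<1$ we always have $(k_1+k_2)\beta/r<k_1$, hence $u_1^\ast<0$: the first integral has no interior critical point, is governed by the endpoint $u=0$, and $G_t^{(1)}\to L_1:=\dfrac{k_1}{k_1-(k_1+k_2)\beta/r}$. For $G_t^{(2)}$ the sign of $u_2^\ast$ equals the sign of $(k_1+k_2)\beta-rk_2$; since this quantity and $\rho-1$, where $\rho:=\tfrac{(k_1+k_2)^2}{r^2k_1k_2}$, are both strictly monotone in $r$ for fixed $k_1,k_2$ and vanish simultaneously (at $r=(k_1+k_2)/\sqrt{k_1k_2}$, where indeed $\beta=\sqrt{k_2/k_1}$), we get $\sign(u_2^\ast)=\sign(\rho-1)$. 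This is exactly the trichotomy of the theorem.

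It remains to run Laplace's method in the three cases. If $\rho<1$ then $u_2^\ast<0$ as well, so $G_t^{(2)}\to L_2:=\dfrac{k_2}{k_2-(k_1+k_2)\beta/r}>0$; using $(\beta/r)(1-\beta/r)=1/r^2$ and $1-2\beta/r=\sqrt{1-4/r^2}$ one simplifies $\dfrac{k_2L_1+k_1L_2}{(k_1+k_2)\sqrt{1-4/r^2}}=\dfrac{1}{1-\rho}$, which together with the Stirling factor gives \eqref{th4-1}. If $\rho>1$ then $u_2^\ast\in(0,1)$ and the ordinary interior Laplace estimate gives $G_t^{(2)}\sim k_2(1-u_2^\ast)^{-1}\sqrt{2\pi t/|\psi_2''(u_2^\ast)|}\;e^{t\psi_2(u_2^\ast)}$, which overwhelms the bounded $G_t^{(1)}$; direct evaluation yields $1-u_2^\ast=k_2/(k_1\beta^2)$, $|\psi_2''(u_2^\ast)|=k_1k_2/\bigl((1-u_2^\ast)^2(k_1+k_2)\bigr)$, and $\mu\,e^{\psi_2(u_2^\ast)}=\beta^{k_1-k_2}$, whereupon the factors of $(1-u_2^\ast)$ cancel and one lands on \eqref{th4-3}. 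If $\rho=1$ then $u_2^\ast=0$: the stationary point sits at the endpoint of integration and contributes only half a Gaussian, $G_t^{(2)}\sim\tfrac{k_2}{2}\sqrt{2\pi t/|\psi_2''(0)|}$ with $|\psi_2''(0)|=k_1k_2/(k_1+k_2)$ and $\mu=\beta^{k_1-k_2}$, which produces \eqref{th4-2} with the factor $\tfrac12$ being precisely the half-Gaussian. Uniformity on compact subsets of the open regions $\{r>2,\rho<1\}$ and $\{r>2,\rho>1\}$ follows since $u_\ell^\ast$, $\psi_\ell''(u_\ell^\ast)$ and the Stirling remainder depend continuously on $(r,k_1,k_2)$ and stay non-degenerate away from $\rho=1$.

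The main obstacle is the bookkeeping in the case $\rho>1$: one must produce $\psi_2(u_2^\ast)$ and $\psi_2''(u_2^\ast)$ in closed form and verify the cancellations $(1-u_2^\ast)^2|\psi_2''(u_2^\ast)|(k_1+k_2)=k_1k_2$ and $\mu\,e^{\psi_2(u_2^\ast)}=\beta^{k_1-k_2}$, which is what forces the limiting constant to collapse to $1/\sqrt{1-4/r^2}$ independently of the direction $(k_1,k_2)$; one must also make the Laplace estimates rigorous and uniform in the presence of the integrable amplitude singularity at $u=1$ (splitting the integral at a fixed point of $(0,1)$ and estimating the tail by $\psi_\ell(1)=-\infty$ handles this). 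As an alternative and a sanity check, the same statement follows from analytic combinatorics in several variables (cf.\ \cite{PW}) applied to the rational generating function $f(z_1,z_2)=\dfrac{r^2z_1z_2}{(r-z_1-z_2)(rz_1z_2-z_1-z_2)}$, whose singular variety is the union of the two smooth sheets $\{z_1+z_2=r\}$ and $\{rz_1z_2=z_1+z_2\}$ meeting transversally at the intersecting zeros $(\beta^{\mp1},\beta^{\pm1})$: the trichotomy records whether the contributing critical point is a smooth point of one sheet (giving the $t^{-1/2}$ decay of \eqref{th4-1}) or the double point (giving the pure exponential decay of \eqref{th4-3}), with $\rho=1$ the boundary direction at which the double point enters the relevant region and so contributes half.
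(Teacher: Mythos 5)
Your proposal is correct, and although it starts from the same reduction as the paper's first proof --- Theorem \ref{coeffi} combined with part 2 of Theorem \ref{3f22f1}, followed by Stirling's formula for the binomial prefactor --- the asymptotic analysis of the two ${}_2F_1$'s is genuinely different. The paper treats them by elementary term-by-term limits justified by geometric-series majorization (Lemma \ref{2f1nasy}, Corollaries \ref{3f2asy} and \ref{3f2cas2asy}), switches to part 1 of Theorem \ref{3f22f1} when $\frac{(k_1+k_2)^2}{r^2k_1k_2}>1$ so that the dominant behaviour is carried by the explicit elementary term, needs the separate calculus inequality of Lemma \ref{ineq} to show the remaining hypergeometric contribution is subdominant, and, for the boundary case $\frac{(k_1+k_2)^2}{r^2k_1k_2}=1$, invokes the uniform expansion of Paris \cite{ParisIV} as in \eqref{Par}. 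You instead put both ${}_2F_1$'s from \eqref{ck1ge2} into Euler-integral form and run Laplace's method throughout, so the trichotomy becomes the location of the stationary point $u_2^\ast$ (absent, interior, or at the endpoint $u=0$); writing $a=\frac r2-\sqrt{\frac{r^2}{4}-1}$ for your $\beta$, your identities $1-u_2^\ast=\frac{k_2}{k_1a^2}$, $\left|\psi_2''(u_2^\ast)\right|=\frac{k_1k_2}{(1-u_2^\ast)^2(k_1+k_2)}$ and $\mu\,e^{\psi_2(u_2^\ast)}=a^{k_1-k_2}$ all check out and reproduce \eqref{th4-1}--\eqref{th4-3}, with the factor $\frac12$ in \eqref{th4-2} arising as the half-Gaussian; your endpoint constant at the critical direction agrees with the Paris asymptotic \eqref{Par} used in the paper. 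Your route buys a self-contained treatment of the delicate equality case (no appeal to \cite{ParisIV}) and makes Lemma \ref{ineq} automatic, since the needed domination is just $\psi_2(u_2^\ast)>0$; its cost is that the Laplace estimates must be made rigorous and uniform on compacts (splitting off the integrable amplitude singularity at $u=1$, and keeping $u_2^\ast$ away from the endpoints, as you indicate). The paper's route avoids saddle-point technicalities and, through the same identities, also yields the complete asymptotic series developed in its remarks, while its second, ACSV-based proof is essentially the alternative you sketch at the end.
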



We will use the following auxiliary result.

\begin{lem}\label{2f1nasy} With $|x|<1$, $0\le\beta\le1$ and $(1+\beta)|x|<1$, we have the convergence 
\begin{equation}\label{asyn2f1}
\lim_{t\to\infty}\hypergeom{2}{1}{1,t(1+\beta)}{t+1}{x}=\frac{1}{1-(1+\beta) x}.
\end{equation}

 With $|x|<1$, $0<\beta\le1$ and $\frac{1+\beta}{\beta}|x|<1$, we have the convergence 
\begin{equation}\label{asyn2f1}
\lim_{t\to\infty}\hypergeom{2}{1}{1,(1+\beta)t}{\beta t+1}{x}=\frac{1}{1-\frac{1+\beta}{\beta} x}.
\end{equation}
The convergence in each of the results is uniform on compact subsets of their respective regions. 

\end{lem}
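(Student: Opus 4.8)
The plan is to expand each $\ _2F_1$ as a power series in $x$, compute the termwise limit of its coefficients as $t\to\infty$, and justify interchanging $\lim_{t\to\infty}$ with the summation by producing a $t$-independent summable majorant (dominated convergence for series, i.e.\ Tannery's theorem).

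For the first identity, since $(1)_n=n!$,
$$\hypergeom{2}{1}{1,t(1+\beta)}{t+1}{x}=\sum_{n=0}^{\infty}\frac{(t(1+\beta))_n}{(t+1)_n}\,x^n,\qquad \frac{(t(1+\beta))_n}{(t+1)_n}=\prod_{j=0}^{n-1}\frac{t(1+\beta)+j}{t+1+j}.$$
I would first record the elementary inequality $\frac{t(1+\beta)+j}{t+1+j}\le 1+\beta$ valid for all $t>0$, $\beta\ge 0$, $j\ge 0$ (cross-multiplying reduces it to $0\le (1+\beta)+\beta j$); hence $0\le \frac{(t(1+\beta))_n}{(t+1)_n}\le (1+\beta)^n$ and $\bigl|\frac{(t(1+\beta))_n}{(t+1)_n}x^n\bigr|\le\bigl((1+\beta)|x|\bigr)^n$, which is summable precisely because $(1+\beta)|x|<1$. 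Since each factor $\frac{t(1+\beta)+j}{t+1+j}\to 1+\beta$ as $t\to\infty$, the $n$-th coefficient tends to $(1+\beta)^n$, and Tannery's theorem gives $\lim_{t\to\infty}\hypergeom{2}{1}{1,t(1+\beta)}{t+1}{x}=\sum_{n\ge 0}(1+\beta)^n x^n=\frac{1}{1-(1+\beta)x}$. The second identity is handled in exactly the same way with coefficient $\frac{((1+\beta)t)_n}{(\beta t+1)_n}=\prod_{j=0}^{n-1}\frac{(1+\beta)t+j}{\beta t+1+j}$: cross-multiplying shows each factor is $\le\frac{1+\beta}{\beta}$ (the inequality reduces to $0\le (1+\beta)+j$), so the majorant is $\bigl(\frac{1+\beta}{\beta}|x|\bigr)^n$, summable because $\frac{1+\beta}{\beta}|x|<1$; each factor tends to $\frac{1+\beta}{\beta}$ as $t\to\infty$ (here $\beta>0$ is used), and the limit is $\frac{1}{1-\frac{1+\beta}{\beta}x}$.

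For uniformity on a compact subset $K$ of the relevant region, note that on $K$ the quantity $(1+\beta)|x|$ (resp.\ $\frac{1+\beta}{\beta}|x|$) is bounded by a fixed $\rho<1$, so the tails $\sum_{n\ge N}$ of both the $t$-series and the limit series are $\le \rho^N/(1-\rho)$, uniformly in $K$ and in $t$. For each fixed $n$ the coefficient converges to its limit uniformly on $K$, since the factors $\frac{t(1+\beta)+j}{t+1+j}$ (resp.\ $\frac{(1+\beta)t+j}{\beta t+1+j}$) converge uniformly there — on $K$ the quantities $1+\beta$ (resp.\ $\beta$) in the denominators are bounded below. Splitting the series at $N$, choosing $N$ so both tails are $<\varepsilon/2$ and then $t$ large so the first $N$ terms are within $\varepsilon/2$, yields the claimed uniform convergence.

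The only real point is securing the $t$-independent summable majorant for the coefficients; once the inequalities $\frac{t(1+\beta)+j}{t+1+j}\le 1+\beta$ and $\frac{(1+\beta)t+j}{\beta t+1+j}\le\frac{1+\beta}{\beta}$ are in hand, the remaining steps are routine bookkeeping.
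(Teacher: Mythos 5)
Your proposal is correct and follows essentially the same route as the paper: expand the $\ _2F_1$ in powers of $x$, bound each Pochhammer ratio factor by $1+\beta$ (resp.\ $\tfrac{1+\beta}{\beta}$) via an elementary cross-multiplication, majorize by the geometric series in $(1+\beta)|x|$ (resp.\ $\tfrac{1+\beta}{\beta}|x|$), and pass to the limit termwise. Your explicit tail-splitting argument for uniformity on compact subsets is a slightly more detailed write-up of what the paper leaves implicit, but the substance is identical.
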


\begin{proof} The hypergeometric function is expanded as
$$
\hypergeom{2}{1}{1,t(1+\beta)}{t+1}{x}=\sum_{j=0}^{\infty}\frac{(t(1+\beta))_j}{(t+1)_j}x^j.
$$
The general term in the above series is
$$
\frac{(t(1+\beta))_j}{(t+1)_j}x^j=\frac{(t(1+\beta))_j}{(t+1)_j(1+\beta)^j}((1+\beta)x)^j.
$$
Since for each $i \in \mathbb{N}$, $0\le \beta\le 1$ and $t>0$,
$$
\frac{t(1+\beta)+i}{(1+\beta)(t+i)}=\frac{1+\frac{i}{t(1+\beta)}}{(1+\frac{i}{t})}\le 1
$$
and
$$
\frac{t(1+\beta)}{(1+\beta)(t+j)}\le 1,
$$
the series above is majorized by the geometric series in $|x|(1+\beta)$, which is less than one in the region considered. Thus taking the limit $t\to\infty$ inside the sum gives the result for $0\le \beta\le 1$. 

For the second part note that for each $i \in \mathbb{N}$, $0<\beta\le 1$ and $t>0$,
$$
\frac{(1+\beta)t+i}{\beta t+i+1}=\frac{(1+\beta)}{\beta}\frac{1+\frac{i}{(1+\beta)t}}{1+\frac{i+1}{\beta t}}<\frac{(1+\beta)}{\beta}.
$$  
Thus the series expansion of $\hypergeom{2}{1}{1,(1+\beta)t}{\beta t+1}{x}$ is majorized by the geometric series in $|x|\frac{1+\beta}{\beta}$, which gives the result.
\end{proof}

Theorem \ref{3f22f1} and Lemma \ref{2f1nasy} yield the following.

\begin{cor}\label{3f2asy} With $0<\beta\le 1$, and $0 \le \frac{(1+\beta)^2 x}{4\beta}<1$, uniformly on compact subsets of the above region
\begin{equation}\label{asy3f1}
\lim_{t\to\infty}\hypergeom{3}{2}{1,t\frac{(1+\beta)}{2}+1,t\frac{(1+\beta)}{2}+\frac{1}{2}}{t+1,t\beta+1}{x}=\frac{1}{1-\frac{(1+\beta)^2 x}{4\beta}} .
\end{equation}
\end{cor}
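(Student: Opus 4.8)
The plan is to turn the ${}_3F_2$ in \eqref{asy3f1} into a fixed linear combination of two ${}_2F_1$'s by specializing identity \eqref{ck1ge2} of Theorem \ref{3f22f1} to $k_1=t$, $k_2=\beta t$, and then to let $t\to\infty$ in each ${}_2F_1$ using Lemma \ref{2f1nasy}. Identity \eqref{ck1ge2} is an equality of functions depending meromorphically on the parameters $k_1,k_2$ (its derivation in the proof of Theorem \ref{3f22f1} already passes through non-integer values of $k_1$ and takes limits afterwards), so substituting the real values $k_1=t$, $k_2=\beta t$ is legitimate.

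Carrying out the substitution, with $k_1=t$ and $k_2=\beta t$ the numerator parameters $\tfrac{k_1+k_2}{2}+1,\ \tfrac{k_1+k_2+1}{2}$ become $\tfrac{t(1+\beta)}{2}+1,\ \tfrac{t(1+\beta)}{2}+\tfrac12$ and the denominator parameters become $t+1,\ \beta t+1$, so the left side of \eqref{ck1ge2} is exactly the ${}_3F_2$ of \eqref{asy3f1}. Writing $s=\sqrt{1-x}$ and $y=\tfrac12-\tfrac12 s$, and cancelling one factor of $t$ between numerator and denominator on the right, \eqref{ck1ge2} becomes
\begin{equation*}
\hypergeom{3}{2}{1,\tfrac{t(1+\beta)}{2}+1,\tfrac{t(1+\beta)}{2}+\tfrac12}{t+1,\beta t+1}{x}=\frac{1}{(1+\beta)s}\left[\beta\,\hypergeom{2}{1}{1,t(1+\beta)}{t+1}{y}+\hypergeom{2}{1}{1,t(1+\beta)}{\beta t+1}{y}\right].
\end{equation*}
The two ${}_2F_1$'s on the right are precisely of the two shapes handled by Lemma \ref{2f1nasy}, with the lemma's variable equal to $y$.

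Next I would check that the hypotheses of Lemma \ref{2f1nasy} are met on the region $0\le\tfrac{(1+\beta)^2x}{4\beta}<1$. There $x\in[0,1)$, hence $s\in(0,1]$ and $y\in[0,\tfrac12)$, so $|y|<1$; moreover $(1+\beta)y<1$ is equivalent to $s>\tfrac{\beta-1}{1+\beta}$, which always holds, while $\tfrac{1+\beta}{\beta}y<1$ is equivalent (using $s^2=1-x$) to $x<\tfrac{4\beta}{(1+\beta)^2}$, i.e.\ exactly the hypothesis of the corollary. Lemma \ref{2f1nasy} then yields $\lim_{t\to\infty}\hypergeom{2}{1}{1,t(1+\beta)}{t+1}{y}=\tfrac{1}{1-(1+\beta)y}$ and $\lim_{t\to\infty}\hypergeom{2}{1}{1,t(1+\beta)}{\beta t+1}{y}=\tfrac{1}{1-\frac{1+\beta}{\beta}y}$, uniformly on compact subsets: the map $(x,\beta)\mapsto(y,\beta)$ is continuous and sends compact subsets of the corollary's region into compact subsets of the two regions of Lemma \ref{2f1nasy}, and the prefactor $\tfrac{1}{(1+\beta)s}$ stays bounded there, so the limit of the finite combination is again uniform on compacta.

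Finally one simplifies
\[\frac{1}{(1+\beta)s}\left[\frac{\beta}{1-(1+\beta)y}+\frac{1}{1-\frac{1+\beta}{\beta}y}\right].\]
Clearing denominators, the bracket equals $\tfrac{2\beta}{(1-\beta)+(1+\beta)s}+\tfrac{2\beta}{(1+\beta)s-(1-\beta)}=\tfrac{4\beta(1+\beta)s}{(1+\beta)^2s^2-(1-\beta)^2}$; since $(1+\beta)^2-(1-\beta)^2=4\beta$ and $s^2=1-x$, the denominator is $4\beta-(1+\beta)^2x$, and dividing by $(1+\beta)s$ cancels the last $s$ to give $\tfrac{4\beta}{4\beta-(1+\beta)^2x}=\tfrac{1}{1-\frac{(1+\beta)^2x}{4\beta}}$, which is \eqref{asy3f1}. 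I do not expect a genuine obstacle: the argument is a specialization of Theorem \ref{3f22f1} followed by two applications of Lemma \ref{2f1nasy}. The only steps needing care are the legitimacy of inserting real parameters into \eqref{ck1ge2} and, above all, the domain bookkeeping in the third paragraph — verifying that the corollary's single condition $\tfrac{(1+\beta)^2x}{4\beta}<1$ is exactly what makes the termwise passage to the limit valid.
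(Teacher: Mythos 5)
Your proposal is correct and follows essentially the same route as the paper's proof: specialize \eqref{ck1ge2} with $k_1=t$, $k_2=\beta t$, verify that the argument $\tfrac12-\tfrac12\sqrt{1-x}$ satisfies the hypotheses of Lemma \ref{2f1nasy} (which is exactly where the condition $\tfrac{(1+\beta)^2x}{4\beta}<1$ enters), pass to the limit in each ${}_2F_1$, and simplify. Your domain bookkeeping and algebraic simplification agree with the paper's computation, just spelled out in more detail.
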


\begin{proof} We use \eqref{ck1ge2} but with  $t=k_1$ and $k_2=\beta t$ so $\beta=\frac{k_2}{k_1}$ considered as continuous parameters. Note that
$
(1+\beta)\frac{1}{2}(1-\sqrt{1-x})<\frac{(1+\beta)}{\beta}\frac{1}{2}(1-\sqrt{1-x})<1
$
 under the hypothesis. Thus Lemma \ref{2f1nasy} is applicable and gives  
\begin{align*}
&\lim_{t\to\infty}\left(\frac{\beta}{(1+\beta )\sqrt{1-x}}\hypergeom{2}{1}{1,(1+\beta)t}{t+1}{ \frac12 -\frac12 \sqrt{1-x}}+ \frac{1}{(1+\beta)\sqrt{1-x}}\hypergeom{2}{1}{1,(1+\beta)t}{\beta t+1}{ \frac12 -\frac12 \sqrt{1-x}}\right)\\&
=\frac{1}{(1+\beta)\sqrt{1-x}}\left(\frac{\beta}{1-(1+\beta)(\frac12 - \frac12\sqrt{1-x})} + \frac{1}{1-\frac{1+\beta}{\beta}(\frac12 - \frac12\sqrt{1-x})}\right)\\&=
\frac{1}{1-\frac{(1+\beta)^2}{4\beta}x}.
\end{align*}
\end{proof}
We will also need the following corollary. 
\begin{cor}\label{3f2cas2asy} With $0\le x<1$, $0\le\beta<1$, and $\frac{(1+\beta)^2 x}{4\beta}>1$, uniformly on compact subsets of the above region
\begin{align*}
&\lim_{t\to\infty}\frac{\beta}{(1+\beta )\sqrt{1-x}}\left(\hypergeom{2}{1}{1,(1+\beta)t}{t+1}{ \frac12 -\frac12 \sqrt{1-x}}- \hypergeom{2}{1}{1,(1+\beta)t}{t+1}{ \frac12 +\frac12 \sqrt{1-x}}\right)\\&=-\frac{1}{\frac{(1+\beta)^2}{4\beta}x-1}.
\end{align*}
\end{cor}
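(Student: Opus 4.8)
The plan is to recognize Corollary~\ref{3f2cas2asy} as a direct consequence of the first part of Lemma~\ref{2f1nasy} applied to \emph{both} hypergeometric functions. Write $w=\sqrt{1-x}$ and let $u_\pm=\tfrac12(1\pm w)$ denote the two arguments appearing in the statement, so that $\hypergeom{2}{1}{1,(1+\beta)t}{t+1}{\tfrac12\pm\tfrac12\sqrt{1-x}}=\hypergeom{2}{1}{1,(1+\beta)t}{t+1}{u_\pm}$. I will use repeatedly the elementary identities $u_++u_-=1$, $u_+u_-=\tfrac{x}{4}$, and $u_--u_+=-w$.

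The one point that needs care is checking that Lemma~\ref{2f1nasy} is actually applicable to the term with argument $u_+$; this is exactly where the standing hypothesis $\tfrac{(1+\beta)^2x}{4\beta}>1$ enters, and it is the feature distinguishing this corollary from Corollary~\ref{3f2asy}. For $u_-$ there is nothing to do: since $x\in[0,1)$ we have $w\in(0,1]$, hence $u_-\in[0,\tfrac12)$ and $(1+\beta)u_-<1$ because $\beta<1$. For $u_+$, a short manipulation shows $(1+\beta)u_+<1\iff(1+\beta)w<1-\beta\iff w^2<\bigl(\tfrac{1-\beta}{1+\beta}\bigr)^2\iff x>\tfrac{4\beta}{(1+\beta)^2}$, and this last inequality is precisely the hypothesis; in particular the hypothesis forces $x>0$, so $w<1$ and $u_+<1$ as well. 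Thus Lemma~\ref{2f1nasy} (first part, with parameter $\beta$ and variable $u_\pm$) yields $\hypergeom{2}{1}{1,(1+\beta)t}{t+1}{u_\pm}\to\frac{1}{1-(1+\beta)u_\pm}$ as $t\to\infty$.

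It then remains to take the difference of the two limits, combine over a common denominator, and simplify. The numerator becomes $(1+\beta)(u_--u_+)=-(1+\beta)w$, and the denominator becomes $1-(1+\beta)(u_++u_-)+(1+\beta)^2u_+u_-=\tfrac{(1+\beta)^2x}{4}-\beta$ after substituting the symmetric-function identities above. Multiplying by the prefactor $\tfrac{\beta}{(1+\beta)\sqrt{1-x}}=\tfrac{\beta}{(1+\beta)w}$ collapses everything to $-\tfrac{\beta}{\frac{(1+\beta)^2x}{4}-\beta}=-\tfrac{1}{\frac{(1+\beta)^2x}{4\beta}-1}$, the asserted limit. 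For the uniformity statement: a compact subset $K$ of the region keeps $x$ bounded away from both $0$ and $1$ (since $x=0$ gives third-condition value $0$, and $x<1$ is open), and since $(1+\beta)u_\pm$ is a continuous function that is $<1$ at every point of the region, it is bounded away from $1$ on $K$, while the prefactor $\tfrac{\beta}{(1+\beta)w}$ is bounded on $K$; hence the uniform-convergence clause of Lemma~\ref{2f1nasy} transfers directly. The main (and essentially only) obstacle is thus the inequality bookkeeping that matches $(1+\beta)u_+<1$ to the hypothesis $\tfrac{(1+\beta)^2x}{4\beta}>1$; once that is in place the conclusion is forced.
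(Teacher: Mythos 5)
Your proposal is correct and follows essentially the same route as the paper: verify that the hypothesis $\tfrac{(1+\beta)^2x}{4\beta}>1$ is exactly what makes $(1+\beta)\bigl(\tfrac12+\tfrac12\sqrt{1-x}\bigr)<1$, apply the first part of Lemma \ref{2f1nasy} to both hypergeometric terms, and simplify the difference of the two limits. Your explicit algebra with $u_\pm$ and the brief uniformity remark only make more detailed what the paper's proof does in one line.
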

\begin{proof} Since
$
(1+\beta)\frac{1}{2}(1-\sqrt{1-x})<(1+\beta)\frac{1}{2}(1+\sqrt{1-x})<1
$
 under the hypothesis, Lemma \ref{2f1nasy} is applicable and gives  
\begin{align*}
&\lim_{t\to\infty}\frac{\beta}{(1+\beta )\sqrt{1-x}}\left(\hypergeom{2}{1}{1,(1+\beta)t}{t+1}{ \frac12 -\frac12 \sqrt{1-x}}- \hypergeom{2}{1}{1,(1+\beta)t}{t+1}{ \frac12 +\frac12 \sqrt{1-x}}\right)\\&
=\frac{\beta}{(1+\beta)\sqrt{1-x}}\left(\frac{1}{1-(1+\beta)(\frac12 - \frac12\sqrt{1-x})} - \frac{1}{1-(1+\beta)(\frac12 + \frac12\sqrt{1-x})}\right)\\&=
\frac{1}{(1+\beta)\sqrt{1-x}}\left(\frac{-(1+\beta)\sqrt{1-x}}{1-(1+\beta)+\frac{(1+\beta)^2}{4}x}\right)\\&=
-\frac{1}{\frac{(1+\beta)^2}{4\beta}x-1}.
\end{align*}
\end{proof}

\begin{lem}\label{ineq} Suppose that $r>2$, $0<\beta <1$ and $r^2 < \frac{(1+\beta)^2}{\beta}$. Then
$$ \frac{1}{r^{1+\beta}} \frac{(1+\beta)^{1+\beta}}{\beta^\beta} < \left(\frac{2}{r}\right)^{1+\beta} \left(\frac{4}{r^2}\right)^{-\beta} \left(\sqrt{1-\frac{4}{r^2}} +1\right)^{\beta-1} = \left( \frac{r}{2} - \sqrt{\frac{r^2}{4}-1} \right)^{1-\beta}. $$
\end{lem}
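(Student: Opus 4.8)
The plan is to reduce everything to a tangent-line estimate for a convex function. First note that the asserted equality on the right is nothing new: it is precisely the computation performed at the end of the proof of Theorem~\ref{coeffi}, where it was shown that $\left(\frac{2}{r}\right)^{1+\beta}\left(\frac{4}{r^2}\right)^{-\beta}\left(\sqrt{1-\frac{4}{r^2}}+1\right)^{\beta-1} = \left(\frac{2/r}{1+\sqrt{1-4/r^2}}\right)^{1-\beta} = \left(\frac{r}{2}-\sqrt{\frac{r^2}{4}-1}\right)^{1-\beta}$. So only the strict inequality needs to be established.

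The key device is to introduce multiplicative variables in which both $r$ and the threshold $(1+\beta)/\sqrt\beta$ take the form $x+x^{-1}$. Set $\rho=\frac{r}{2}-\sqrt{\frac{r^2}{4}-1}\in(0,1)$, so that $r=\rho+\rho^{-1}$, and set $\sigma=\sqrt\beta\in(0,1)$, so that $s:=\frac{1+\beta}{\sqrt\beta}=\sigma+\sigma^{-1}$. Since $x\mapsto x+x^{-1}$ is strictly decreasing on $(0,1)$, the hypothesis $r^2<\frac{(1+\beta)^2}{\beta}$, i.e.\ $r<s$, translates into $\sigma<\rho$. Next I would simplify the left-hand side: from $1+\beta=\sigma s$ and $\beta^\beta=\sigma^{2\beta}$ one gets $\frac{(1+\beta)^{1+\beta}}{\beta^\beta}=\sigma^{1-\beta}s^{1+\beta}$, so the inequality to be proved is
\[
\sigma^{1-\beta}\left(\frac{s}{r}\right)^{1+\beta}<\rho^{1-\beta},\qquad\text{i.e.}\qquad \left(\frac{s}{r}\right)^{1+\beta}<\left(\frac{\rho}{\sigma}\right)^{1-\beta}.
\]
Taking logarithms (all four quantities are positive, and $s>r$, $\rho>\sigma$), this becomes $(1+\beta)(\ln s-\ln r)<(1-\beta)(\ln\rho-\ln\sigma)$.

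Finally, substitute $\sigma=e^{-a}$ and $\rho=e^{-b}$ with $a>b>0$. Then $s=2\cosh a$ and $r=2\cosh b$, so $\ln s-\ln r=\ln\cosh a-\ln\cosh b$ and $\ln\rho-\ln\sigma=a-b$; moreover $1+\beta=1+e^{-2a}=2e^{-a}\cosh a$ and $1-\beta=1-e^{-2a}=2e^{-a}\sinh a$. Dividing the displayed inequality through by $2e^{-a}\cosh a>0$, it becomes
\[
\ln\cosh a-\ln\cosh b<(\tanh a)(a-b).
\]
Let $g(x)=\ln\cosh x$; then $g'(x)=\tanh x$ and $g''(x)=1-\tanh^2 x>0$, so $g$ is strictly convex on $(0,\infty)$. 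Hence the tangent line to $g$ at $a$ lies strictly below the graph at the point $b\ne a$, that is, $g(b)>g(a)+g'(a)(b-a)$, which rearranges exactly to the last display. This finishes the proof.

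The only step that is not routine bookkeeping is the initial realization that $r$ and $(1+\beta)/\sqrt\beta$ should both be written as $x+x^{-1}$, after which the hyperbolic substitution collapses the whole estimate to the standard tangent-line inequality for the convex function $\ln\cosh$; everything else is elementary algebra with exponents and logarithms. I would also record that equality holds in the limiting case $r=s$ (then $a=b$), which is consistent with the strictness being lost exactly on the boundary $r^2=(1+\beta)^2/\beta$.
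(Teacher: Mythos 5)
Your proof is correct. It verifies the equality exactly as the paper does at the end of the proof of Theorem \ref{coeffi}, and the strict inequality is established rigorously: the substitutions $r=\rho+\rho^{-1}$, $\frac{1+\beta}{\sqrt\beta}=\sigma+\sigma^{-1}$ with $\rho,\sigma\in(0,1)$, the translation of the hypothesis into $\sigma<\rho$, the algebra giving $\bigl(\frac{s}{r}\bigr)^{1+\beta}<\bigl(\frac{\rho}{\sigma}\bigr)^{1-\beta}$, and the final reduction via $\sigma=e^{-a}$, $\rho=e^{-b}$ to $\ln\cosh a-\ln\cosh b<(\tanh a)(a-b)$ all check out, and the last inequality is indeed the strict tangent-line inequality for the strictly convex function $\ln\cosh$ at the point $a$, evaluated at $b\neq a$.

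The route, however, differs from the paper's. The paper fixes $\beta$, introduces the explicit auxiliary function $f(r)=\left(\frac{r}{2}+\frac12\sqrt{r^2-4}\right)r^{\frac{\beta+1}{\beta-1}}$, computes $f'(r)$ directly, checks that the sign condition $(1+\beta)\sqrt{r^2-4}+(\beta-1)r<0$ holds exactly when $r^2<\frac{(1+\beta)^2}{\beta}$, and concludes $f(r)<f\bigl(\frac{1+\beta}{\sqrt\beta}\bigr)$, which after unwinding the negative exponent $\frac{1}{\beta-1}$ is the stated inequality. Your argument replaces this derivative computation by a structural one: the hyperbolic reparametrization turns both $r$ and the threshold $\frac{1+\beta}{\sqrt\beta}$ into values of $2\cosh$, and the whole estimate becomes the standard convexity inequality $g(b)>g(a)+g'(a)(b-a)$ for $g=\ln\cosh$. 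Both proofs hinge on the same underlying fact, namely that equality holds exactly on the boundary $r^2=\frac{(1+\beta)^2}{\beta}$ and that the two sides separate monotonically as $r$ decreases from that value; but your version makes strictness and the equality case $a=b$ transparent and avoids the somewhat fussy sign analysis of $f'$, at the cost of the (nice) preliminary observation that both quantities should be written as $x+x^{-1}$. Either argument would serve in the paper.
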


\begin{proof}
Let $f(r) = \left(\frac{r}{2}+\frac12 \sqrt{r^2-4}\right)r^\frac{\beta+1}{\beta-1}$. Then
$$f'(r) = \frac{(r+\sqrt{r^2-4})r^{\frac{2}{\beta-1}}\left( (1+\beta)\sqrt{r^2-4}+(\beta-1)r\right)}{\sqrt{r^2-4}(2\beta-2)} . $$
The denominator is negative due to $2\beta-2 <0$. The numerator is negative as $r^2 < \frac{(1+\beta)^2}{\beta}$ implies $(1+\beta)\sqrt{r^2-4}+(\beta-1)r <0$.  Thus $f'(r)>0$, and thus $f(r)$ is increasing. This implies that for $r < \frac{1+\beta}{\sqrt{\beta}}$, we have that $f(r) < f\left(\frac{1+\beta}{\sqrt{\beta}} \right)$. Rewriting the last inequality yields the result.
\end{proof}

We now turn to the proof of the asymptotics of $c_{tk_1, tk_2}$. Our first proof uses the expressions we established.  We will also need Stirling's approximation formula for binomial coefficients:
\begin{align}\label{stir}
\frac{((1+\beta)t)!}{t!(\beta t)!}&=\frac{\Gamma((1+\beta)t+1)}{\Gamma(\beta t+1)\Gamma(t+1)}\nonumber\\&=\sqrt{\frac{1}{2\pi t}}\frac{(1+\beta)^{(1+\beta) t+\frac{1}{2}}}{\beta^{\beta t+\frac{1}{2}}}\bigg(1-\frac{1}{t} \ \frac{1+\beta+\beta^2}{12 \beta (1 + \beta)} +\frac{1}{t^2}\frac{(1 + \beta+\beta^2)^2}{288\beta^2 (1 + \beta)^2}+O\left(\frac{1}{t^3}\right)\bigg).
\end{align}
This can be obtained using the asymptotic expansion of the Gamma function \cite[p. 88]{Olver}.

\medskip

\noindent {\it First proof of Theorem \ref{asymt}.} Let $k_1\ge k_2 \ge 0$. First we consider the case when $\frac{(k_1+k_2)^2}{r^2 k_1 k_2 } >1$. First note that Theorem \ref{coeffi} and part 1 of Theorem \ref{3f22f1} yield
\begin{align}\label{ck1gek2}
&c_{k_1,k_2}=\nonumber\\&\frac{(k_1 +k_2)!}{(k_1)!(k_2)!r^{k_1+k_2}}\frac{k_2}{(k_1+k_2) \sqrt{1-x}} \left[ \hypergeom{2}{1}{1,k_1+k_2}{k_1+1}{ \frac12 -\frac12 \sqrt{1-x}} -\hypergeom{2}{1}{1,k_1+k_2}{k_1+1}{ \frac12 +\frac12 \sqrt{1-x}} \right]\nonumber\\&+\left(\frac{2}{r}\right)^{k_1+k_2}\frac{x^{-k_2}}{\sqrt{1-x}}(\sqrt{1-x}+1)^{k_2-k_1} ,
\end{align}
where $x=(\frac{2}{r})^2$.

From Corollary \ref{3f2cas2asy} and Stirling's approximation~\eqref{stir} for binomial coefficients the first term in \eqref{ck1gek2} has the asymptotic
$$
-\sqrt{\frac{1}{2\pi t}}\frac{1}{r^{(1+\beta)t}}\frac{(1+\beta)^{(1+\beta) t+\frac{1}{2}}}{\beta^{\beta t+\frac{1}{2}}}\frac{1}{\frac{(1+\beta)^2}{4\beta} x-1}.
$$
which decays faster  than the second term due to Lemma \ref{ineq}. Equation \eqref{th4-3} now follows.

Next, we consider the case when $\frac{(k_1+k_2)^2}{r^2 k_1 k_2 } <1$. With $k_1=t,\ k_2=\beta t$,
we have by Theorem 1 and part 2 of Theorem~\ref{3f22f1} part 2,
\begin{align*}
 &c_{k_1,k_2}=\frac{(k_1 +k_2)!}{(k_1)!(k_2)!r^{k_1+k_2}}\times \\&\left( \frac{k_2}{(k_1+k_2)\sqrt{1-x}}\hypergeom{2}{1}{1,k_1+k_2}{k_1+1}{ \frac12 -\frac12 \sqrt{1-x}}+ \frac{k_1}{(k_1+k_2)\sqrt{1-x}}\hypergeom{2}{1}{1,k_1+k_2}{k_2+1}{ \frac12 -\frac12 \sqrt{1-x}}\right)
\end{align*}
Now apply  Corollary~\ref{3f2asy} and Stirling's formula~\eqref{stir}, and we obtain that 
 the asymptotics as $t\to \infty$ of the Fourier coefficients $c_{t,\beta t}$ are given by
\begin{equation}\label{aonecsv}
\lim_{t\to\infty}\frac{\sqrt{t}\ r^{t(1+\beta)}\beta^{\beta t}}{(1+\beta)^{(1+\beta)t}}c_{t,\beta t}=\frac{1}{\sqrt{2\pi}}\frac{(1+\beta)^{\frac{1}{2}}}{\beta^{\frac{1}{2}}}\frac{1}{1-\frac{(1+\beta)^2}{\beta}\left(\frac{1}{r}\right)^2}
\end{equation}
for $\frac{(1+\beta)^2}{\beta}\left(\frac{1}{r}\right)^2<1$. Thus we have obtained Equation \eqref{th4-1}. 


Finally, it remains to prove the case when $\frac{(k_1+k_2)^2}{r^2 k_1 k_2 } =1$. Let  $k_1=t,\ k_2=\beta t$, then we find that 
$x= \frac{4}{r^2}=\frac{4\beta}{(1+\beta)^2}$ gives $\frac12+\frac12\sqrt{1-x} = \frac{1}{1+\beta}$. By \cite[Theorem 2]{ParisIV} (see also \cite[Section 9]{Jones}) we have that
\begin{equation}\label{Par} \hypergeom{2}{1}{1,t(1+\beta)}{t+1}{ \frac{1}{1+\beta}}=\sqrt{\frac{\pi t  (1+\beta)}{2\beta}} \left( 1+O\left(\frac{1}{\sqrt{t}}\right) \right) \end{equation}
(in the notation of \cite{ParisIV} we have $a=0, b=c=1, \epsilon=\beta+1, \lambda = t, x=\frac{1}{1+\beta}$). 
Let us consider the quotient of the third term and the first term of the right hand side of \eqref{ck1ge} (without the sign). Let  $k_1=t,\ k_2=\beta t$, then we find that this quotient equals
$$ \frac{\beta}{1+\beta} \frac{ \hypergeom{2}{1}{1,t(1+\beta)}{t+1}{ \frac{1}{1+\beta}} (4\beta)^{t\beta} {(1+\beta)t \choose t}}{ 2^{(1+\beta)t}\left(\frac{2}{(1+\beta)}\right)^{(\beta -1)t}} .$$
Using \eqref{Par} and Stirling's formula~\eqref{stir} we obtain that this quotient is approximated as $t\to \infty$ by
$$ \frac{\beta}{1+\beta} \sqrt{\frac{\pi t  (1+\beta)}{2\beta}} \frac{\beta^{t\beta} ((1+\beta)t)^{(1+\beta)t} \sqrt{2\pi (1+\beta)t}}{(1+\beta)^{(1-\beta)t}
t^t (\beta t)^{\beta t} \sqrt{2\pi t} \sqrt{2\pi \beta t}} = \frac12.$$
As the second term in the right hand side of \eqref{ck1ge} is dominated by the other two, we now have that for $x=\frac{4k_1k_2}{(k_1+k_2)^2}=\frac{4}{r^2}$,
$$ \lim_{t\to \infty} \frac{\hypergeom{3}{2}{1,\frac{tk_1+tk_2}{2}+1,\frac{tk_1+tk_2+1}{2}}{tk_1+1,tk_2+1}{x}}{\frac12  \frac{2^{tk_1+tk_2} \left(1+\sqrt{1-x}\right)^{tk_2-tk_1}}{x^{tk_2} {tk_1+tk_2 \choose tk_1}\sqrt{1-x}}} = 1, $$
which finishes the proof for the case $\frac{(k_1+k_2)^2}{r^2 k_1 k_2 } =1$. \hfill $\Box$

Next we provide a proof of the asymptotics based on a general theory presented in the book \cite{PW}. This theory concerns the asymptotics of coefficients in the Laurent series of rational functions in several variables.

\noindent {\it Second proof of Theorem \ref{asymt}.} 
We have
\[
f(z_1,z_2)=\frac{z_1z_2}{p(z_1,z_2)\overleftarrow{p}(z_1,z_2)}.
\]
The Fourier coefficients equal the Laurent coefficients of the unique absolutely convergent series expansion in a domain of $\mathbb{C}^2$ containing the points of $\mathbb{T}^2$. 
We need to consider, in addition to the intersection zeros (where $p=0=\overleftarrow{p}$), the so-called smooth points which are solutions to the equations
\begin{equation}\label{ppp} p(z_1,z_2)=0, \ \ k_1 z_2 \frac{\partial p}{\partial z_2}=k_2z_1\frac{\partial p}{\partial z_1} \end{equation} or the equations
\begin{equation}\label{pppp} \overleftarrow{p}(z_1,z_2)=0, \ \ k_1 z_2 \frac{\partial \overleftarrow{p}}{\partial z_2}=k_2z_1\frac{\partial \overleftarrow{p}}{\partial z_1} .\end{equation} Solving \eqref{ppp} gives us
$\left(z_1, z_2\right) = \left(\frac{rk_1}{k_1+k_2}, \frac{rk_2}{k_1+k_2}\right)$. When we solve \eqref{pppp} we find the solutions $(0,0)$ and $({z_1}, {z_2})=\left(\frac{k_1+k_2}{rk_1}, \frac{k_1+k_2}{rk_2}\right)$.
As observed in \cite[Theorem 2.4.4]{Wong}, we know the asymptotics is $O\left(\lvert z_1^{-k_1}z_2^{-k_2}\rvert^t\right)$, where $(z_1,z_2)$ is a smooth point or an intersecting zero. 

Let now $\frac{(k_1+k_2)^2}{r^2 k_1 k_2 } <1$. 
Using \cite[Proposition 5.4]{Melczer}, we see that the point $\left(\frac{rk_1}{k_1+k_2}, \frac{rk_2}{k_1+k_2}\right)$ contributes to the asymptotics. Let $\mathbf{w}=\left(\frac{rk_1}{k_1+k_2}, \frac{rk_2}{k_1+k_2}\right)$ and $\mathbf{x}=(1,1)$. Since the Fourier coefficients are non-negative, the series expansion is combinatorial(see \cite[Definition 5.8]{Melczer}). Therefore, we need to show for $0 < s < 1$,
\[
(z_1(s),z_2(s)) = s\mathbf{w}+(1-s)\mathbf{x} = \left(\frac{rk_1}{k_1+k_2}+1-s, \frac{rk_2}{k_1+k_2}+1-s\right)
\]
cannot be a zero of $p(z_1,z_2)$ or $\overleftarrow{p}(z_1,z_2)$. Indeed, for $p(z_1,z_2)$,
\[
p(z_1(s), z_2(s)) = 1 - \frac{1}{r}\left(\frac{rk_1}{k_1+k_2}+1-s + \frac{rk_2}{k_1+k_2}+1-s\right)
=1-\frac{1}{r}(r+2-2s),
\]
which can only equal to 0 when $ s = 1$. For $\overleftarrow{p}(z_1,z_2)$,
\begin{align*}
\overleftarrow{p}(z_1(s),z_2(s))& =\left(\frac{rk_1}{k_1+k_2}+1-s\right)\left(\frac{rk_2}{k_1+k_2}+1-s\right)-\frac{1}{r}\left(\frac{rk_1}{k_1+k_2}+1-s + \frac{rk_2}{k_1+k_2}+1-s\right)\\
&=\left(\frac{k_1k_2r^2}{(k_1+k_2)^2}-r+1\right)s^2+\left(r-3+\frac{2}{r}\right)s+1 - \frac{2}{r}.
\end{align*}
Since $\frac{k_1k_2r^2}{(k_1+k_2)^2}>1$,
\[
\overleftarrow{p}(z_1(s),z_2(s)) > (2-r)s^2+\left(r-3+\frac{2}{r}\right)s+1 - \frac{2}{r} =:g(s).
\]
Solving the equation $g(s)=0$, we have that $s=-1/r$ and $s = 1$. Therefore, if there are solutions to $\overleftarrow{p}(z_1(s),z_2(s))=0$, we must have that $s < -1/r$ or $s > 1$. Thus, compared to $\left(\frac{rk_1}{k_1+k_2}, \frac{rk_2}{k_1+k_2}\right)$, there is no other singularity that is coordinate-wise closer to a point on $\mathbb{T}^2$.

The rate is given by \cite[Theorem 9.2.7]{PW} (see also \cite[Theorem 9.5.7]{PW}, which is specific to the two variable case), with (in the notation of \cite{PW})


\begin{align*}
f(z_1,z_2)&=\frac{z_1z_2/(z_1z_2-z_1/r-z_2/r)}{(1-z_1/r-z_2/r)}=\frac{G(z_1,z_2)}{H(z_1,z_2)},\\
\frac{-G(z_1,z_2)}{\frac{\partial H}{\partial z_2} (z_1,z_2)}\biggr\rvert_{\left(\frac{k_1r}{k_1+k_2},\frac{k_2r}{k_1+k_2}\right)}&=\frac{1}{\frac{1}{r}\left( 1-\frac{(k_1+k_2)^2}{r^2k_1k_2}\right)},\\
\phi(\theta)&=\log\frac{r-z_1e^{i\theta}}{r-z_1}+i\theta\frac{k_1}{k_2}, \hbox{\rm with} \ z_1=\frac{rk_1}{k_1+k_2},\\
\mathcal{H}(k_1,k_2)&=\phi'' (0) = \frac{k_1(k_1+k_2)}{k_2^2},
\end{align*}
which gives the asymptotic rate
\begin{align*}
\Phi_\mathbf{z}^{(2)}(\mathbf{k}t)&=(2\pi)^{-1/2}
\left(\frac{(k_1+k_2)^{k_1+k_2}}{r^{k_1+k_2} k_1^{k_1} k_2^{k_2} } \right)^t
\left( \frac{k_1(k_1+k_2)}{k_2^2} \right)^{-1/2}
\frac{1}{\frac{1}{r}\left( 1-\frac{(k_1+k_2)^2}{r^2k_1k_2}\right)}
\frac{k_1+k_2}{k_2r}(k_2t)^{-1/2}\\
&=\frac{1}{\sqrt{2\pi t}}\sqrt{\frac{k_1+k_2}{k_1k_2}}\frac{1}{1-\frac{(k_1+k_2)^2}{r^2k_1k_2}}\left(\frac{(k_1+k_2)^{k_1+k_2}}{r^{k_1+k_2} k_1^{k_1} k_2^{k_2} } \right)^t, \mathbf{k}=(k_1,k_2).
\end{align*}

When $\frac{(k_1+k_2)^2}{r^2 k_1 k_2 } > 1$, the contribution of the smooth points becomes less than that of the intersecting zero (use Lemma \ref{ineq} with $\beta = \frac{k_2}{k_1}$).
In order to apply \cite[Theorem 10.3.1]{PW}, we calculate
$$ \det \begin{pmatrix} \frac{z_1\partial p}{\partial z_1 } & \frac{z_2\partial p }{\partial z_2} \cr 
 \frac{z_1\partial \overleftarrow{p}}{\partial z_1 } & \frac{z_2\partial \overleftarrow{p} }{\partial z_2} \end{pmatrix} = 
 \det \begin{pmatrix} -\frac{z_1}{r} & -\frac{z_2}{r} \cr 
 z_1(z_2-\frac{1}{r}) & z_2 (z_1-\frac{1}{r})\end{pmatrix} = \frac{z_1z_2(z_2-z_1)}{r} , $$
 which evaluated at the intersecting zero gives $\frac{\sqrt{r^2-4}}{r} = \sqrt{1 - \frac{4}{r^2}}$. Now 
\cite[Theorem 10.3.1]{PW} yields the asymptotic rate (in their notation)
$$
\Phi_z({\bf k}t)= \frac{1}{\sqrt{1 - \frac{4}{r^2}}} \left(\frac{r}{2}+\sqrt{\frac{r^2}{4}-1}\right)^{-k_1t} \left(\frac{r}{2}-\sqrt{\frac{r^2}{4}-1}\right)^{-k_2t}= \frac{1}{\sqrt{1 - \frac{4}{r^2}}}\left(\frac{r}{2}+\sqrt{\frac{r^2}{4}-1}\right)^{-(k_1-k_2)t}.
$$

For the case when $\frac{(k_1+k_2)^2}{r^2 k_1 k_2 } = 1$, we apply \cite[Corollary 2.4]{PW2010}, which gives that the convergence is at the rate in the previous case multiplied by $\frac12$.
\hfill $\Box$

\medskip

Both approaches to the proof of Theorem \ref{asymt} also allow us to obtain a complete asymptotic series for $c_{tk_1,tk_2}$ as we will explain in the following remarks. 

\begin{rem}\rm
To obtain a complete asymptotic series for $c_{tk_1,tk_2}$ we need an asymptotic series for the $\ _2F_1$'s appearing in Lemma \ref{2f1nasy}. This can be obtained by using the following formulas from \cite[Formulas (2.5) and (3.4)]{Fields}, 
\begin{equation}\label{poh1}
\frac{((1+\beta) t)_j}{((1+\beta)t)^j}=\sum_{k=0}^j\frac{(1-j)_k}{k!} B^{(j)}_k(0)((1+\beta)t)^{-k}=:\sum_{k=0}^{j-1}\alpha_{j,k}t^{-k},
\end{equation}
and for $a>0$ and $at>j$,
\begin{equation}\label{poh2}
\frac{(at)^j}{(at+1)_j}=\frac{(at)^{j+1}}{(at)_{j+1}}=\sum_{k=0}^{\infty}(-1)^k\frac{(j+1)_k}{k!} B^{(-j)}_k (0) (at)^{-k}=:\sum_{k=0}^{\infty}\gamma_{j,k}t^{-k}.
\end{equation}
Here the $B^{(j)}_k(0)$  are the generalized Bernoulli numbers given by
\begin{equation}\label{bern}
\left(\frac{t}{e^t-1}\right)^j=\sum_{k=0}^{\infty}\frac{t^k}{k!} B^{(j)}_k(0), |t|<2\pi .
\end{equation}
The first few are  $B^{(j)}_0(0)=1$, $B^{(j)}_1(0)=-\frac{j}{2}$, and $B^j_2(0)=\frac{j(3j-1)}{12}$.
We now modify the argument given in Theorem~2 and Lemma~2 of \cite{Fields}.
Suppose $0<a\le 1+\beta$, $0\le u\le 1$, and $i\ge 0$. 
Let $$a_i(x)= \frac{1+\frac{ix}{1+\beta}}{1+\frac{(i+1)x}{a}}= 
\frac{a}{i+1} \left( \frac{i}{1+\beta} + \frac{ \frac{i+1}{a}-\frac{i}{1+\beta} }{1+\frac{i+1}{a}x} \right) . 
$$ Then
we have for $0\le i\le k-1$ and $x\ge 0$,
\begin{equation}\label{nderiv}
\left|a^{(n)}_i(x)\right| =  \frac{\left(\frac{i+1}{a}-\frac{i}{1+\beta} \right) n! \left( \frac{i+1}{a}\right)^{n-1}}{\left(1+\frac{i+1}{a} x\right)^{n+1} } \le n!\left(\frac{k}{a}\right)^{n}, n\ge 1.
\end{equation}
Checking it separately, one sees that \eqref{nderiv} also holds for $n=0$.
Next, let
$$
F_k(x)=\prod_{i=0}^{k-1} \frac{1+\frac{ix}{1+\beta}}{1+\frac{(i+1)x}{a}}= a_0(x) a_1(x) \cdots a_{k-1}(x). 
$$
Using Leibniz's product rule and \eqref{nderiv}, we obtain
$$
\max_{x\ge 0}\left|F_k^{(n)}(x)\right|\le n! \frac{k^{2n}}{a^n}.
$$
From Taylor's theorem  we find
\begin{equation}\label{tay}
\prod_{i=0}^{k-1} \frac{1+\frac{i}{(1+\beta)t}}{1+\frac{(i+1)}{at}}=\sum_{i=0}^{n-1}\frac{d_{k,i}}{t^i}+R_{k,n}(t),
\end{equation}
with the bound
$$
|R_n(t)|\le \frac{k^{2n}}{(at)^n}.
$$
Thus, from the calculations above,
\begin{align}\label{asywitherror}
\bigg|\sum_{k=0}^{\infty}\bigg(\frac{a^k((1+\beta)t)_k}{(1+\beta)^k(at+1)_k}-\sum_{i=0}^{n-1}\frac{d_{k,i}}{t^i}\bigg)\left(\frac{1+\beta}{a}x\right)^k\bigg|&\le\frac{1}{a^nt^n}\sum_{k=0}^{\infty}k^{2n}\left|\left(\frac{1+\beta}{a}x \right)^k\right|,
\end{align}
where for each fixed $n$ the last sum converges as $\left|\frac{1+\beta}{a}x\right|<1$.
Examination of Equations~\eqref{poh1} and \eqref{poh2} shows that
$$
d_{k,i}=\sum_{j=0}^{i} \gamma_{k,j}\alpha_{k,i-j}.
$$
The above calculations are examples of the confluence principle, see \cite{Fields}. 

As we are able to bound the right hand side of \eqref{asywitherror}, we arrive at the following statement.
\begin{thm}\label{betterthm}
With $0\le x\le\frac{1+\beta}{a}x<1$ and the notation above, we have
\begin{equation}\label{asyser1}
\left|\hypergeom{2}{1}{1,t(1+\beta)}{at+1}{x}-\sum_{i=0}^{n-1}\sum_{j=0}^i\sum_{k=0}^{\infty}\frac{\gamma_{k,j}\alpha_{k,i-j}}{t^i}\left(\frac{1+\beta}{a}x\right)^k \right|\le\frac{(2n)!}{2}\frac{(1+\beta)x(a+(1+\beta)x)}{a^{n+2}(1-\frac{(1+\beta)x}{a})^{2n+1}}\frac{1}{t^{n}},
\end{equation}
which leads to the asymptotic series
\begin{equation}\label{simser}
\hypergeom{2}{1}{1,t(1+\beta)}{at+1}{x}\sim 
\sum_{i=0}^{\infty}t^{-i}\left(\sum_{j=0}^{i}\sum_{k=0}^{\infty}\gamma_{k,j}\alpha_{k,i-j}\left(\frac{(1+\beta)}{a}x \right)^k\right).
\end{equation}
\end{thm}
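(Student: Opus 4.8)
The plan is to package the estimates assembled in the discussion immediately preceding the theorem. The crucial point is that the left-hand side of the claimed inequality \eqref{asyser1} is precisely the left-hand side of the already-derived bound \eqref{asywitherror}. Reorganizing the defining series termwise and writing $y:=\frac{1+\beta}{a}x\in[0,1)$, one has
$$\hypergeom{2}{1}{1,t(1+\beta)}{at+1}{x}=\sum_{k=0}^{\infty}\frac{\big((1+\beta)t\big)_k}{(at+1)_k}x^k=\sum_{k=0}^{\infty}F_k(1/t)\,y^k,$$
and since each factor $a_i$ of $F_k$ satisfies $0\le a_i\le 1$ on $[0,\infty)$ --- this is exactly where the hypothesis $a\le 1+\beta$ enters --- we get $0<F_k(1/t)\le 1$, so every series below is dominated by $\sum y^k$ and may be split and rearranged at will. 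Writing $d_{k,i}:=F_k^{(i)}(0)/i!$ for the Taylor coefficients of $F_k$ at $0$, the Cauchy product of the two expansions \eqref{poh1} and \eqref{poh2} identifies $d_{k,i}=\sum_{j=0}^{i}\gamma_{k,j}\alpha_{k,i-j}$, so the sum subtracted in \eqref{asyser1} is $\sum_{i=0}^{n-1}t^{-i}\sum_{k\ge0}d_{k,i}y^k$; combining this with the Lagrange remainder bound $|R_{k,n}(t)|\le k^{2n}/(at)^n$ from \eqref{nderiv}--\eqref{tay} gives
$$\Big|\hypergeom{2}{1}{1,t(1+\beta)}{at+1}{x}-\sum_{i=0}^{n-1}\frac{1}{t^i}\sum_{k\ge0}d_{k,i}y^k\Big|=\Big|\sum_{k\ge0}R_{k,n}(t)y^k\Big|\le\frac{1}{a^n t^n}\sum_{k=0}^{\infty}k^{2n}y^k,$$
which is \eqref{asywitherror}.

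With this reduction in hand, the entire theorem comes down to the closed-form estimate
$$\sum_{k=0}^{\infty}k^{2n}y^k\ \le\ \frac{(2n)!}{2}\,\frac{y(1+y)}{(1-y)^{2n+1}},\qquad 0\le y<1,\ n\ge1.$$
To prove this I would invoke the standard generating-function identity $\sum_{k\ge0}k^{2n}y^k=\dfrac{y\,P_{2n}(y)}{(1-y)^{2n+1}}$, where $P_{2n}$ denotes the Eulerian polynomial; it has nonnegative coefficients (hence is convex on $[0,\infty)$), with $P_{2n}(0)=1$ and $P_{2n}(1)=(2n)!$. Convexity gives, for $y\in[0,1]$, $P_{2n}(y)\le(1-y)P_{2n}(0)+yP_{2n}(1)=(1-y)+y(2n)!$, and since $(2n)!\ge2$ for $n\ge1$ the difference $\frac{(2n)!}{2}(1+y)-\big((1-y)+y(2n)!\big)=(1-y)\big(\frac{(2n)!}{2}-1\big)$ is nonnegative, which yields the displayed inequality. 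Substituting $y=\frac{(1+\beta)x}{a}$, together with the identity $\frac{y(1+y)}{a^n}=\frac{(1+\beta)x\,(a+(1+\beta)x)}{a^{n+2}}$, turns the previous bound into exactly \eqref{asyser1}. The asymptotic series \eqref{simser} is then immediate: for each fixed $n$ the right-hand side of \eqref{asyser1} is $O(t^{-n})$, and the coefficient of $t^{-i}$ in the series is the convergent sum $\sum_{j=0}^{i}\sum_{k\ge0}\gamma_{k,j}\alpha_{k,i-j}y^k$, convergence being guaranteed by $|d_{k,i}|\le k^{2i}/a^i$.

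The only genuinely nonroutine point --- and the one I would expect to demand the most care --- is getting the sharp constant in the power-series estimate. The naive bound $k^{2n}\le(2n)!\binom{k+2n-1}{2n}$ gives only $\sum_k k^{2n}y^k\le(2n)!\,y/(1-y)^{2n+1}$, which is too weak near $y=1$ to yield the stated inequality; one really does need the Eulerian-polynomial representation and the fact that the bounding affine function $\frac{(2n)!}{2}(1+y)$ meets the curve $P_{2n}(y)$ at the right endpoint $y=1$. Everything else --- the termwise reorganization, the derivative estimates on $F_k$ obtained from Leibniz's rule and $\binom{n+k-1}{k-1}\le k^n$, the Lagrange remainder, and the Cauchy-product formula for $d_{k,i}$ --- has already been carried out in the text above, so the proof is essentially a matter of assembling those pieces and inserting this single inequality.
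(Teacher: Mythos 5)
Your argument is correct, and its overall structure coincides with the paper's: the theorem is reduced (as in the discussion preceding it, which your first paragraph simply repackages) to bounding the right-hand side of \eqref{asywitherror}, i.e.\ to the single inequality $\sum_{k\ge 1}k^{2n}y^k\le \frac{(2n)!}{2}\frac{y(1+y)}{(1-y)^{2n+1}}$ for $y=\frac{(1+\beta)x}{a}\in[0,1)$, after which \eqref{asyser1} and \eqref{simser} follow exactly as you say. Where you diverge is in the proof of that key inequality. The paper proves it by a direct coefficient comparison: expanding $\frac{(2n)!}{2}\frac{(1+y)}{(1-y)^{2n+1}}=\sum_j \frac{(2n)!}{2}a_j y^j$ via the binomial series and checking termwise that $\frac{(2n)!}{2}a_j=(j+1)(j+2)\cdots(j+2n-1)(n+j)\ge (j+1)^{2n}$ for $n\ge 1$, so that multiplying by $y$ dominates $\sum_{k\ge1}k^{2n}y^k$. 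You instead invoke the Eulerian polynomial identity $\sum_{k\ge0}k^{2n}y^k=\frac{yP_{2n}(y)}{(1-y)^{2n+1}}$ together with nonnegativity of the Eulerian numbers (hence convexity of $P_{2n}$ on $[0,\infty)$) and the endpoint values $P_{2n}(0)=1$, $P_{2n}(1)=(2n)!$, so the chord bound gives $P_{2n}(y)\le(1-y)+y\,(2n)!\le\frac{(2n)!}{2}(1+y)$; I checked the convexity direction and the final algebraic comparison, and both are right. Your route is conceptually clean and in fact explains the paper's own footnote about the polynomial $p_n(y)=P_{2n}(y)/(1+y)$, at the cost of importing standard facts about Eulerian polynomials; the paper's route is more pedestrian but entirely self-contained, using only the binomial series. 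Your side remark that the naive estimate $k^{2n}\le(2n)!\binom{k+2n-1}{2n}$ only yields the weaker constant $(2n)!$ rather than $\frac{(2n)!}{2}(1+y)$ is also accurate, and correctly identifies why some sharper input is needed to match the stated bound.
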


\begin{proof} The only thing left to show is that right hand side of \eqref{asywitherror} is bounded above by 
the right hand side of \eqref{asyser1}. We show this by proving the following.

{\bf Claim.} For $y\in[0,1)$ and $n\in {\mathbb N}$, we have $\sum_{k=1}^\infty k^{2n} y^k \le \frac{(2n)!}{2} \frac{y(1+y)}{(1-y)^{2n+1}}$.\footnote{The exact expression for the left hand side is $\sum_{k=1}^\infty k^{2n} y^k =\frac{y(1+y)p_n(y)}{(1-y)^{2n+1}}$, where $p_n(y)= \frac{d^{2n}}{dt^{2n}} \left(\frac{(1-y)^{2n+1}e^t}{y(1+e^t)(1-ye^t)}\right)|_{t=0}$, which is a polynomial of degree $2n-2$. Item A171692 on the On-Line Encyclopedia of Integer Sequences (oeis.org) gives the coefficients of $p_n(y)$.}
\newline
Indeed, we have $\frac{1}{(1-y)^{2n+1}}= \sum_{j=0}^\infty (2n+1)_j \frac{y^j}{j!}$, so that $\frac{1+y}{(1-y)^{2n+1}} = \sum_{j=0}^\infty a_j y^j$, where $a_0=1$ and 
$$ a_j =(2n+1) (2n+2)\cdots (2n+j-1) \left( \frac{1}{(j-1)!} + \frac{1}{j!} (2n+j) \right),\ j\ge 1. $$
Then
$$ \frac{(2n)!}{2}a_j =\frac{(2n+j-1)!}{j!} \frac{2n+2j}{2} = (j+1)(j+2)\cdots (j+2n-1)(n+j) \ge (j+1)^{2n}, n\ge 1. $$
Thus
$$ \frac{(2n)!}{2} \frac{y(1+y)}{(1-y)^{2n+1}} \ge \sum_{j=0}^\infty(j+1)^{2n}y^{j+1} = \sum_{k=1}^\infty k^{2n}y^k. $$
This proves the claim.

Now one applies the above claim to the right hand side of \eqref{asywitherror} to obtain \eqref{asyser1}.
\end{proof}

Writing out the first few terms in \eqref{simser} gives
\begin{align}\label{asyser2f1}
&\hypergeom{2}{1}{1,t(1+\beta)}{at+1}{x}=
\nonumber\\
&=\sum_{j=0}^{\infty}\left(1+\frac{h_1}{t}+\frac{h_2}{t^2}+O\left(\frac{1}{t^3}\right)\right)\left(\frac{1+\beta}{a}x\right)^j,
\end{align}
where
$$
h_1=\gamma_{1,0}\alpha_{1,1}+\gamma_{1,1}\alpha_{1,0}=\frac{j}{2}\left(\frac{(j-1)}{1+\beta}-\frac{j+1}{a}\right)=-\frac{j}{a}+\frac{j(j-1)}{2}\frac{a-\beta-1}{a(1+\beta)}
$$
and
\begin{align*}
h_2&=\frac{(3j+1)j(j+1)(j+2)}{24 a^2}-\frac{j^2(j-1)(j+1)}{4(1+\beta)a}+\frac{(3j-1)j(j-1)(j-2)}{24(1+\beta)^2}\\&=\frac{j}{a^2}+\frac{j(j-1)}{2}\left(\frac{5(1+\beta-a)+2a}{a^2(1+\beta)}\right)+\frac{j(j-1)(j-2)}{6}\frac{(1+\beta-a)(7(1+\beta)-2a)}{a^2(1+\beta)^2}\\&+\frac{j!}{8(j-4)!}\frac{(1+\beta-a)^2}{a^2(1+\beta)^2}.
\end{align*}
Thus, using the geometric series and its first few derivatives, 
\begin{align}\label{tm2}
&\hypergeom{2}{1}{1,t(1+\beta)}{at+1}{x}\nonumber\\&=\frac{1}{y}-\frac{(1+\beta)x}{a^2t}\frac{1}{y^2}\left(1+(\beta+1-a)\frac{x}{ay}\right)\nonumber\\&+\frac{(1+\beta)x}{a^3t^2}\frac{1}{y^2}\bigg(1+(5(1+\beta-a)+2a)\frac{x}{ay}+(1+\beta-a)(7(1+\beta)-2a)\frac{x^2}{a^2y^2}\nonumber\\&+(1+\beta-a)^2(1+\beta)\frac{3x^3}{a^3y^3}\bigg)+O\left(\frac{1}{t^{3}}\right),
\end{align}
where $y=1-\frac{1+\beta}{a}x$.

We now obtain
\begin{align}\label{asympt}
 &\frac{\beta}{(1+\beta)\sqrt{1-x}}\hypergeom{2}{1}{1,(1+\beta)t}{t+1}{ \frac12 -\frac12 \sqrt{1-x}}+ \frac{1}{(1+\beta)\sqrt{1-x}}\hypergeom{2}{1}{1,(1+\beta)t}{\beta t+1}{ \frac12 -\frac12 \sqrt{1-x}}\nonumber\\&=b_0+\frac{b_1}{t}+\frac{b_2}{t^2}+O\left(\frac{1}{t^{3}}\right),
\end{align}
where, with $y_1=1-(1+\beta)(\frac12-\frac12\sqrt{1-x})$ and $y_2=1-\frac{(1+\beta)}{\beta}(\frac12-\frac12\sqrt{1-x})$, and $y_1y_2=1-\frac{(1+\beta)^2}{4\beta}x$,
\begin{equation*}
b_0=\frac{1}{(1+\beta)\sqrt{1-x}}\left(\frac{\beta}{y_1}+\frac{1}{y_2}\right)=\frac{1}{1-\frac{(1+\beta)^2}{4\beta}x},
\end{equation*}
\begin{align*}
b_1&=-\frac{1-\sqrt{1-x}}{2\sqrt{1-x}}\left(\frac{\beta}{y_1^2}+\frac{1}{\beta^2 y_2^2}\right)-\frac{(1-\sqrt{1-x})^2}{4\sqrt{1-x}}\left(\frac{\beta^2}{y_1^3}+\frac{1}{\beta^3y_2^3}\right)\\&=\frac{x}{16\beta^2}\frac{(1+\beta)^3 x-4(1+\beta^3)}{\left(1-\frac{(1+\beta)^2}{4\beta}x \right)^3},
\end{align*}
and with the help of Mathematica \cite{wolfram},
\begin{align*}
b_2&=\frac{x}{(4\beta)^4(1-\frac{(1+\beta)^2}{4\beta}x)^5}\bigg(64\beta(1+\beta^4)\\&+16(1+\beta)^2(1+(-3+\beta)\beta)(2+\beta(2\beta-1))x-20(1-\beta)^2(1+\beta)^4 x^2+(1+\beta)^6 x^3\bigg).
\end{align*}
Note that Theorem~\ref{betterthm} yields that the error term $O(1/t^3)$ in \eqref{asympt} is bounded by 
\begin{equation}\label{errterm}
\left| O\left(\frac{1}{t^3}\right)\right|\le\frac{1}{t^3}\frac{6!(1-\sqrt{1-x})}{4\sqrt{1-x}}\frac{\bigg((1+(1+\beta)\frac{(1-\sqrt{1-x})}{2})\beta^6y_2^7+(\beta+(1+\beta)\frac{(1-\sqrt{1-x})}{2})y_1^7\bigg)}{\beta^5(1-\frac{(1+\beta)^2}{4\beta}x)^7}.
\end{equation}
Substituting $x=\frac{4}{r^2}$ in \eqref{asympt} and combining it with Stirling's formula~\eqref{stir}, we find
\begin{align}\label{ctbt}
&\sqrt{2\pi t}\frac{\beta^{\beta t+\frac{1}{2}}}{(1+\beta)^{(1+\beta) t+\frac{1}{2}}}r^{(1+\beta)t}c_{t,\beta t}=\nonumber\\
&b_0+\frac{1}{t}\left(b_1-\frac{(1+ \beta+ \beta^2)b_0}{12 \beta (1 + \beta)}\right)+\frac{1}{t^2}\left(b_2-\frac{(1+\beta+\beta^2)b_1}{12\beta(1 + \beta)}+\frac{(1 + \beta+\beta^2)^2b_0}{288\beta^2 (1 +\beta)^2}\right) +O\left(\frac{1}{t^3}\right).
\end{align}
An estimate on the error term $O(1/t^3)$ in \eqref{ctbt} can be obtained by using Equation~\eqref{errterm} and the error in Stirling's approximation for combinations. 

An alternative to the above calculations can be obtained by beginning with the Euler integral representation for the above ${}_2F_1$s. A complete asymptotic expansion using this integral representation has been presented in \cite[Formula (3.13)]{ParisI} and \cite[Formula 1.2] {ParisIV} for the case $\epsilon=(1+\beta)$ or $\epsilon=\frac{1+\beta}{a}$, and $x\epsilon<1$. 
\hfill $\square$
\end{rem}

\begin{rem}\label{melczer}\rm
The results \cite[Theorem 9.2.7]{PW} and \cite[Theorem 5.3]{Melczer} also give a complete asymptotic series. Consequently, one can use these results to obtain additional terms in the expansion. For instance, in the case when $\frac{(k_1+k_2)^2}{r^2k_1k_2} <1$, computation yields\footnote{The codes that were used are due to Stephen Melczer and are available via {\sf melczer.ca/textbook/}.}
$$ c_{tk_1,tk_2} = \frac{1}{\sqrt{2\pi t}} \frac{r^2\sqrt{k_1k_2}\sqrt{{k_1+k_2}}}{r^2k_1k_2-(k_1+k_2)^2}\left( \frac{(k_1+k_2)^{k_1+k_2}}{r^{k_1+k_2} k_1^{k_1} k_2^{k_2} } \right)^t \left( 1- \frac{H}{t} + O \left(\frac{1}{t^2}\right)  \right) ,  $$ where  
$$ H=\frac{k_1^2k_2^2(k_1^2+k_1k_2 + k_2^2) r^4 + 2k_1k_2(k_1+k_2)^2(5k_1^2-7k_1k_2+5k_2^2)r^2+(k_1+k_2)^4(k_1^2-11k_1k_2+k_2^2)}{12 (r^2k_1k_2-(k_1+k_2)^2)^2 k_1k_2(k_1+k_2)}. $$
The expressions for following terms can also be computed in this way. For instance, if we specify the values $k_1 =2 , k_2=1, r=3$ (to keep the formulas contained), we obtain
\begin{equation}\label{melc} 
c_{2t,t}= \frac{\sqrt{3}\left( \frac14 \right)^t}{\sqrt{\pi t}} \left( 1-\frac{55}{2^3 3^2t} + \frac{26065}{2^7 3^4 t^2} -
\frac{32881015}{2^{10} 3^7 t^3} + \frac{78037754977}{2^{15} 3^9 t^4} + O \left( \frac{1}{t^5} \right) \right) . 
\end{equation}
In order match \eqref{melc} with \eqref{ctbt} one needs to take $\beta = \frac12$, $r=3$, and do a parameter change $t \to 2t$. \hfill $\square$
\end{rem}

\bigskip

The asymptotic result in Theorem \ref{asymt} for the case when $\frac{(k_1+k_2)^2}{r^2 k_1 k_2 } =1$ may be written in the following way, which is of potential interest in probability.

\begin{cor} With $k_1>k_2>0$ and $p=\frac{k_1}{k_1+k_2}$ (and thus $1-p=\frac{k_2}{k_1+k_2}$), we have
\begin{equation}\label{cutoff3} \lim_{t \to \infty} \left( \sum_{i=0}^\infty {tk_1+tk_2 +2i \choose tk_1+i } p^{tk_1+i} (1-p)^{tk_2+i} \right) =  \frac{\frac14}{p-\frac12} .\end{equation}
\end{cor}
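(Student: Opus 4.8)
The plan is to recognize the sum in \eqref{cutoff3} as, up to an explicit $t$-dependent scalar, the Fourier coefficient $c_{tk_1,tk_2}$ of Theorem~\ref{coeffi} evaluated at the critical radius $r^2=(k_1+k_2)^2/(k_1k_2)$, and then to quote the critical-case asymptotics \eqref{th4-2}.

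First I would rewrite the series \eqref{c000} for $c_{k_1,k_2}$ in closed factorial form. Using $(k_1+k_2+1)_i(k_1+k_2+i+1)_i=(k_1+k_2+1)_{2i}=\frac{(k_1+k_2+2i)!}{(k_1+k_2)!}$ together with $(k_j+1)_i=\frac{(k_j+i)!}{k_j!}$, equation \eqref{c000} collapses to
$$c_{k_1,k_2}=\sum_{i=0}^{\infty}\frac{(k_1+k_2+2i)!}{(k_1+i)!\,(k_2+i)!}\,\frac{1}{r^{k_1+k_2+2i}}.$$
Now set $r=\frac{k_1+k_2}{\sqrt{k_1k_2}}$, which exceeds $2$ since $(k_1-k_2)^2>0$. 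Then $r^{-(k_1+k_2+2i)}=\frac{(k_1k_2)^{\,i+(k_1+k_2)/2}}{(k_1+k_2)^{k_1+k_2+2i}}$, whereas the $i$-th summand of \eqref{cutoff3} carries the weight $p^{k_1+i}(1-p)^{k_2+i}=\frac{k_1^{k_1+i}k_2^{k_2+i}}{(k_1+k_2)^{k_1+k_2+2i}}$. The ratio of these two weights is $\left(\frac{k_1}{k_2}\right)^{(k_1-k_2)/2}$, which is independent of $i$; hence, after replacing $(k_1,k_2)$ by $(tk_1,tk_2)$ (so that the critical radius is unchanged), one gets the exact identity
$$\sum_{i=0}^{\infty}\binom{tk_1+tk_2+2i}{tk_1+i}p^{tk_1+i}(1-p)^{tk_2+i}=\left(\frac{k_1}{k_2}\right)^{t(k_1-k_2)/2}c_{tk_1,tk_2},$$
where $c_{tk_1,tk_2}$ is computed at $r=\frac{k_1+k_2}{\sqrt{k_1k_2}}$, i.e. exactly on the locus $\frac{(k_1+k_2)^2}{r^2k_1k_2}=1$.

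Next I would insert the asymptotics \eqref{th4-2}. At this value of $r$ one computes $\sqrt{\tfrac{r^2}{4}-1}=\frac{k_1-k_2}{2\sqrt{k_1k_2}}$, hence $\frac{r}{2}-\sqrt{\tfrac{r^2}{4}-1}=\sqrt{\tfrac{k_2}{k_1}}$ and $\sqrt{1-\tfrac{4}{r^2}}=\frac{k_1-k_2}{k_1+k_2}$. Substituting \eqref{th4-2} into the identity above gives
$$\lim_{t\to\infty}\sum_{i=0}^{\infty}\binom{tk_1+tk_2+2i}{tk_1+i}p^{tk_1+i}(1-p)^{tk_2+i}=\lim_{t\to\infty}\left(\frac{k_1}{k_2}\right)^{t(k_1-k_2)/2}\left(\frac{k_2}{k_1}\right)^{t(k_1-k_2)/2}\frac{1}{2\sqrt{1-\tfrac{4}{r^2}}}=\frac{k_1+k_2}{2(k_1-k_2)};$$
the two exponential factors in $t$ cancel precisely because $\bigl(\tfrac{r}{2}-\sqrt{\tfrac{r^2}{4}-1}\bigr)^{k_1-k_2}=\bigl(\tfrac{k_2}{k_1}\bigr)^{(k_1-k_2)/2}$. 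Finally, since $p-\tfrac12=\frac{k_1-k_2}{2(k_1+k_2)}$, the right-hand side equals $\frac{1/4}{p-1/2}$, which is \eqref{cutoff3}.

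Nothing here is genuinely difficult; the one point needing care is verifying that the weight ratio $p^{k_1+i}(1-p)^{k_2+i}\big/ r^{-(k_1+k_2+2i)}$ really is constant in $i$ — this is what lets the sum collapse to a scalar multiple of $c_{tk_1,tk_2}$ — and, correlatively, checking that the factors $\left(\tfrac{k_1}{k_2}\right)^{t(k_1-k_2)/2}$ and $\bigl(\tfrac{r}{2}-\sqrt{\tfrac{r^2}{4}-1}\bigr)^{t(k_1-k_2)}$ are exact reciprocals, so that the limit is finite and nonzero. Both reduce to short algebraic identities once $r$ is written in terms of $k_1$ and $k_2$.
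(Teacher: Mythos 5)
Your proposal is correct and follows essentially the same route as the paper: rewrite \eqref{c000} as $\sum_i \binom{tk_1+tk_2+2i}{tk_1+i}r^{-(tk_1+tk_2+2i)}$, observe that at the critical radius $r=\frac{k_1+k_2}{\sqrt{k_1k_2}}$ this differs from the sum in \eqref{cutoff3} only by the constant-in-$i$ factor $\left(\frac{k_1}{k_2}\right)^{t(k_1-k_2)/2}$, and then invoke the critical-case asymptotics \eqref{th4-2}, with the exponential factors cancelling since $\frac{r}{2}-\sqrt{\frac{r^2}{4}-1}=\sqrt{k_2/k_1}$. The algebraic identities you flag as needing care are exactly the ones the paper verifies, so no gap remains.
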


\begin{proof}
Recall that \eqref{c000} gives us
\begin{equation}\label{c0000}
c_{tk_1,tk_2}=\sum_{i=0}^{\infty}\frac{1}{r^{2i+tk_1+tk_2}}\frac{(tk_1+i+1)_{tk_2}(tk_1+i+tk_2+1)_i}{(1)_{tk_2}(tk_2+1)_i}.
\end{equation}
When $\frac{(k_1+k_2)^2}{r^2 k_1 k_2 } =1$ we have $$ r=\frac{k_1+k_2}{\sqrt{k_1 k_2}},  $$
and thus
$$ \frac{r^2}{4} - 1 = \frac{(k_1+k_2)^2 }{4k_1k_2} - 1 = \frac{k_1^2-2k_1k_2 +k_2^2}{4k_1k_2} = \frac{(k_1-k_2)^2}{4k_1k_2}. $$
Also,
$$ 1-\frac{4}{r^2} = \frac{(k_1+k_2)^2 }{(k_1+k_2)^2 }- \frac{4k_1k_2}{(k_1+k_2)^2 }=\frac{(k_1-k_2)^2 }{(k_1+k_2)^2 }, $$
and
$$ \frac{r}{2}-\sqrt{\frac{r^2}{4}-1}=\frac{k_1+k_2}{2\sqrt{k_1 k_2}}-\frac{k_1-k_2}{2\sqrt{k_1 k_2}}=\sqrt{\frac{k_2}{k_1}}.$$

In Theorem \ref{asymt} we have proven that 
\begin{equation}\label{cutoff} \lim_{t\to\infty}\frac{c_{tk_1, tk_2}}{\left(\frac{r}{2}-\sqrt{\frac{r^2}{4}-1}\right)^{t(k_1-k_2)}} = \frac12 \frac{ 1} {\sqrt{1-\frac{4}{r^2}}}.  \end{equation}

It is easy to check that
$$ \frac{(tk_1+i+1)_{tk_2}(tk_1+i+tk_2+1)_i}{(1)_{tk_2}(tk_2+1)_i} = {tk_1+tk_2 +2i \choose tk_1+i }. $$
Also,
$$ \frac{1}{r^{2i+tk_1+tk_2}} \frac{1}{ \left(\frac{r}{2}-\sqrt{\frac{r^2}{4}-1}\right)^{t(k_1-k_2)}} = 
\left( \frac{\sqrt{k_1 k_2}}{k_1+k_2}\right)^{tk_1+tk_2 + 2i} \left( \sqrt{\frac{k_1}{k_2}} \right)^{tk_1-tk_2}= \frac{k_1^{tk_1+i} k_2^{tk_2+i}}{(k_1+k_2)^{tk_1+tk_2+2i}} .$$

Thus \eqref{cutoff} is equivalent to 
\begin{equation}\label{cutoff2} \lim_{t \to \infty} \left( \sum_{i=0}^\infty {tk_1+tk_2 +2i \choose tk_1+i } \frac{k_1^{tk_1+i} k_2^{tk_2+i}}{(k_1+k_2)^{tk_1+tk_2+2i}} \right) = \frac12 \frac{k_1+k_2 }{k_1-k_2}. \end{equation}

%
If we let $p=\frac{k_1}{k_1+k_2}$, then $1-p= \frac{k_2}{k_1+k_2}$, and $2 \frac{k_1+k_2 }{k_1-k_2} =\frac{1}{p-\frac12}$, this transforms \eqref{cutoff2} into \eqref{cutoff3}.
\end{proof}

\section{Orthogonal Polynomials}
\label{sec:Ortho}
In this section we consider the bivariate orthonormal polynomials associated with the above Fourier coefficients where the monomials are ordered lexicographically. The orthogonal polynomials constructed with this ordering and their characterizing properties were studied in \cite{DGK,GWSIAM}.
We introduce the symmetric sesquilinear form (or Hermitian form)
$$ \langle g,h \rangle = \frac{1}{4\pi^2} \int_0^{2\pi} \int_0^{2\pi} \frac{g(e^{i\theta} , e^{i\phi}) \overline{h(e^{i\theta} , e^{i\phi})}}{|p(e^{i\theta} , e^{i\phi})|^2} d\theta \ d\phi  =  \frac{1}{4\pi^2} \int_{\mathbb T}  \int_{\mathbb T}  \frac{g (z_1,z_2)  \overline{ h ({z_1} , {z_2} )}}{p(z_1,z_2)\bar p(\frac{1}{z_1} , \frac{1}{z_2} )  } \frac{dz_1}{iz_1} \frac{dz_2}{iz_2} $$
for $g$ and $h$ square integrable functions on the bidisk ${\mathbb T}^2$. Denote ${\mathbb Z}_+ = \{ 0, 1, 2, \ldots \}$.

\begin{theorem}\label{ortho} Let 
$$ a=\frac{r}{2}-\sqrt{\frac{r^2}{4}-1}.$$ Then the two variable polynomials
$$ \sqrt[4]{1-\frac{4}{r^2}} , $$ $$ z_1^{k} \frac{\sqrt[4]{1-\frac{4}{r^2}}}{\sqrt{1-a^2}}(z_1-a), k\in {\mathbb Z}_+, $$  $$ z_2^{k} \frac{\sqrt[4]{1-\frac{4}{r^2}}}{\sqrt{1-a^2}}(z_2-a), k\in {\mathbb Z}_+, $$
$$ z_1^{k_1}z_2^{k_2}\left(z_1z_2 - \frac{z_1}{r} - \frac{z_2}{r}\right), (k_1,k_2) \in {\mathbb Z}_+^2, $$
form a complete set of orthonormal polynomials with respect to $\langle \cdot , \cdot \rangle$.
\end{theorem}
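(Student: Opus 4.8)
The plan is to check, directly, two things: that the listed polynomials are pairwise orthogonal and of unit norm, and that their linear span is all of $\mathbb{C}[z_1,z_2]$. Since the Hilbert space at issue is the completion of $\mathbb{C}[z_1,z_2]$ under $\langle\cdot,\cdot\rangle$ (one should note that $\mathbb{C}[z_1,z_2]$ is \emph{not} dense in all of $L^2$ of the measure $\tfrac{1}{4\pi^2|p|^2}\,d\theta_1\,d\theta_2$ — for instance $\overline{z_1}\,|p(z_1,z_2)|^2$ is a nonzero Laurent polynomial orthogonal to every monomial — so ``complete'' here means complete for the closed linear span of the polynomials), these two facts together yield completeness. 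Everything rests on two workhorses. First, $\langle z_1^{m_1}z_2^{m_2},z_1^{n_1}z_2^{n_2}\rangle = c_{n_1-m_1,\,n_2-m_2}$, where only the elementary branch of Theorem~\ref{coeffi} is ever needed, namely $c_{k_1,k_2} = (1-\tfrac{4}{r^2})^{-1/2}\,a^{|k_1|+|k_2|}$ whenever $k_1k_2\le 0$, with $a = \tfrac{r}{2}-\sqrt{\tfrac{r^2}{4}-1}$. Second, $\overleftarrow{p}(z_1,z_2)=z_1z_2\,\overline{p(z_1,z_2)}$ on $\mathbb{T}^2$, so that $|\overleftarrow{p}|^2=|p|^2$ there and $\overline{\overleftarrow{p}}=z_1^{-1}z_2^{-1}p$ on $\mathbb{T}^2$, while $1/\overline{p}=\sum_{i,j\ge 0}\binom{i+j}{i}r^{-i-j}z_1^{-i}z_2^{-j}$ (compare \eqref{2dinv}) is a series in non-positive powers of $z_1$ and of $z_2$.

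For orthonormality I would first record that $a$ is a root of $t^2-rt+1$, hence $1+a^2=ar$, $1-a^2=a\sqrt{r^2-4}$, $\sqrt{1-\tfrac{4}{r^2}}=\tfrac{1-a^2}{1+a^2}$, $\sqrt[4]{1-\tfrac{4}{r^2}}=\sqrt{\tfrac{1-a^2}{1+a^2}}$ and $\tfrac{\sqrt[4]{1-4/r^2}}{\sqrt{1-a^2}}=\tfrac{1}{\sqrt{1+a^2}}$; these identities make the normalization constants collapse. Expanding each listed polynomial into monomials and using $\langle z^m,z^n\rangle=c_{n-m}$, the cases are: the constant, with $\langle 1,1\rangle=c_{0,0}=(1-\tfrac4{r^2})^{-1/2}$; the $z_1$-family against the constant, where $\langle z_1^k(z_1-a),1\rangle=c_{-(k+1),0}-a\,c_{-k,0}=(1-\tfrac4{r^2})^{-1/2}(a^{k+1}-a^{k+1})=0$ (so $a$ is exactly the common ratio of the geometric sequence $(c_{m,0})_m$, which is why this choice of $a$ works); the $z_1$-family against itself, which reduces to $(1+a^2)c_{l-k,0}-a\,c_{l-k+1,0}-a\,c_{l-k-1,0}=(1-\tfrac4{r^2})^{-1/2}(1-a^2)\,\delta_{kl}$ (checking $|k-l|\in\{0,1\}$ by hand and $|k-l|\ge 2$ uniformly), which after the normalization is $\delta_{kl}$; and the $z_1$-family against the $z_2$-family, which produces $c_{-(k+1),l+1}-a\,c_{-(k+1),l}-a\,c_{-k,l+1}+a^2c_{-k,l}$, all of whose indices have non-positive product, hence equals $(1-\tfrac4{r^2})^{-1/2}a^{k+l}(a^2-a^2-a^2+a^2)=0$. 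The corresponding $z_2$-statements follow from the symmetry $z_1\leftrightarrow z_2$ of $p$.

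For the $\overleftarrow{p}$-family: among its own members, $\langle z_1^{k_1}z_2^{k_2}\overleftarrow{p},\,z_1^{l_1}z_2^{l_2}\overleftarrow{p}\rangle=\tfrac{1}{4\pi^2}\int_{\mathbb{T}^2}z_1^{k_1-l_1}z_2^{k_2-l_2}\,\tfrac{|\overleftarrow{p}|^2}{|p|^2}\,d\theta_1\,d\theta_2=\delta_{k_1l_1}\delta_{k_2l_2}$, since $|\overleftarrow{p}|^2=|p|^2$ on $\mathbb{T}^2$. For orthogonality of this family to the constant and to either $z_j$-family, write $\overline{z_1^{k_1}z_2^{k_2}\overleftarrow{p}}\,/|p|^2=z_1^{-k_1-1}z_2^{-k_2-1}/\overline{p}$; because $1/\overline{p}$ contains only non-positive powers of $z_2$, the integrand in each such pairing has $z_2$-degree $\le -k_2-1<0$, so its average over the $z_2$-circle — and hence the whole inner product — vanishes.

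Finally, completeness is linear algebra: from the constant and $z_1^k(z_1-a)=z_1^{k+1}-az_1^k$ one obtains every $z_1^k$ by induction on $k$, likewise every $z_2^k$; then the relation $z_1^{m_1}z_2^{m_2}=z_1^{m_1-1}z_2^{m_2-1}\overleftarrow{p}+\tfrac1r z_1^{m_1}z_2^{m_2-1}+\tfrac1r z_1^{m_1-1}z_2^{m_2}$ for $m_1,m_2\ge 1$ lets an induction on $m_1+m_2$ put every monomial in the span, so the span is exactly $\mathbb{C}[z_1,z_2]$, which is dense in its $\langle\cdot,\cdot\rangle$-completion by construction. I do not expect a genuine obstacle: the work is organizational rather than conceptual, with the only points demanding care being (1) reading ``complete'' as complete for the closed span of polynomials rather than for all of $L^2$, (2) the degenerate indices ($k=0$, or $m_1$ or $m_2$ zero) in the geometric-sum identities, and (3) keeping the two roles of ``$a$'' — the parameter $\tfrac r2-\sqrt{\tfrac{r^2}{4}-1}$ and the base of the exponents — notationally distinct. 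It is also worth flagging in advance the pleasant feature that, despite the hypergeometric formula for $c_{k_1,k_2}$ when $k_1k_2>0$, every inner product above falls into the elementary regime $k_1k_2\le 0$ or is evaluated with no reference to the $c_{k_1,k_2}$ at all.
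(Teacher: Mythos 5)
Your argument is correct, and its computational core coincides with the paper's: the inner products of monomials are reduced to the elementary branch of Theorem~\ref{coeffi} (indices with $k_1k_2\le 0$), and all pairings involving the third family exploit the cancellation $\bar p(1/z_1,1/z_2)=z_1^{-1}z_2^{-1}\overleftarrow{p}(z_1,z_2)$, your nonpositive-power-series argument for $1/\bar p$ being the same mechanism as the paper's appeal to Cauchy's formula. You diverge from the paper in two ways. First, your observation that $|\overleftarrow{p}|=|p|$ on $\mathbb{T}^2$ gives orthonormality within the $\overleftarrow{p}$-family in one line (the paper instead computes $\langle q,q\rangle=\langle q,z_1z_2\rangle=1$ and records only the rectangle orthogonality \eqref{or}); moreover your series argument yields orthogonality of $z_1^{k_1}z_2^{k_2}\overleftarrow{p}$ to every monomial with $l_1\le k_1$ or $l_2\le k_2$, which covers the cross-pairings with the pure $z_1$- and $z_2$-families that \eqref{or} does not literally reach (one cosmetic slip: for the $z_2$-family the vanishing must be run on the $z_1$-powers, not the $z_2$-powers as written; the symmetric argument is immediate). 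Second, and mainly, the final step is genuinely different: the paper concludes by citing the characterization of orthonormal polynomials in the lexicographic ordering from \cite{DGK,GWSIAM} --- its ``complete set'' means precisely the system of lex-ordered orthonormal polynomials of this measure --- whereas you prove completeness self-containedly, via mutual orthogonality of the entire list together with the span identity $z_1^{m_1}z_2^{m_2}=z_1^{m_1-1}z_2^{m_2-1}\overleftarrow{p}+\tfrac1r z_1^{m_1}z_2^{m_2-1}+\tfrac1r z_1^{m_1-1}z_2^{m_2}$, reading ``complete'' as complete for the closed span of $\mathbb{C}[z_1,z_2]$ (your caveat that the polynomials are not dense in the full $L^2$ of the measure, witnessed by $\bar z_1|p|^2$, is correct and apt). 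Your route buys a self-contained statement --- an orthonormal basis of the polynomial closure --- at the cost of not identifying the list with the lexicographic construction of \cite{DGK,GWSIAM}, which is the sense intended at the start of Section~4 and is exactly what the paper's one-line appeal to the characterization supplies.
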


Notice that by \cite[Proposition 2.1.2]{GWAnnals} we have 
$$ \left( \sum_{k\in {\mathbb Z}} c_{k,0}z_1^k \right)^{-1} = \left(1-\frac{z_1}{r}\right)\left(1-\frac{1}{z_1r}\right)-\frac{1}{r^2} = 1-\frac{z_1}{r}-\frac{1}{z_1r}, $$
which has roots $a$ and $\frac{1}{a}$. This explains why the orthogonal polynomials that only have the variable $z_1$, have $a$ as a root. The same is true for the orthogonal polynomials that only have the variable $z_2$.

\begin{proof} By Theorem \ref{coeffi},
$$ \langle z_1^{k_1} z_2^{k_2} , z_1^{l_1} z_2^{l_2} \rangle =  \langle z_1^{k_1-l_1} z_2^{k_2-l_2} , 1 \rangle = c_{k_1-l_1,k_2-l_2} = \frac{a^{|k_1-l_1|+|k_2+l_2|}}{\sqrt{1-\frac{4}{r^2}}} , \ (k_1-l_1)(k_2-l_2)\le 0. $$
Using this it is straightforward to see that
$$ z_1^k (z_1-a) \perp {\rm Span} \{ z_1^{l_1} z_2^{l_2} : l_1=0,1,\ldots , k,\ l_2=0, 1,2, \ldots \}, k \ge 0,$$ 
and 
$$ z_2^k (z_2-a) \perp {\rm Span} \{ z_1^{l_1} z_2^{l_2} : l_1=0,1, 2\ldots ,\ l_2=0, 1, \ldots, k \}  , k \ge 0. $$ 
In addition,
$$ \langle 1,1 \rangle = \frac{1}{\sqrt{1-\frac{4}{r^2}}} , $$ $$\langle z_1^k(z_1-a),z_1^k( z_1-a )\rangle = \langle z_1-a, z_1-a \rangle = c_{0,0} -2ac_{1,0} + a^2 c_{0,0} = \frac{1-a^2}{\sqrt{1-\frac{4}{r^2}}} , $$
(and the same equality holds when $z_1$ is replaced by $z_2$), explaining the normalization factors.

Next, let $q(z_1,z_2) = z_1z_2 - \frac{z_1}{r} - \frac{z_2}{r}$. Note that $\bar p(\frac{1}{z_1} , \frac{1}{z_2} ) = \frac{1}{z_1z_2} q(z_1,z_2)$. Thus, 
$$ \langle z_1^{k_1} z_2^{k_2} q (z_1,z_2) , 1 \rangle = \frac{1}{4\pi^2} \int_{\mathbb T}  \int_{\mathbb T}  \frac{z_1^{k_1} z_2^{k_2} q (z_1,z_2) }{p(z_1,z_2)\bar p(\frac{1}{z_1} , \frac{1}{z_2} )  } \frac{dz_1}{iz_1} \frac{dz_2}{iz_2} = 
\frac{1}{4\pi^2} \int_{\mathbb T}  \int_{\mathbb T}  \frac{z_1^{k_1+1} z_2^{k_2+1}  }{p(z_1,z_2) } \frac{dz_1}{iz_1} \frac{dz_2}{iz_2}
=0 $$
when $k_1, k_2 \ge -1$ and $(k_1,k_2) \neq (-1,-1)$. Here we used that $\frac{z_1^{k_1+1} z_2^{k_2+1}}{p(z_1, z_2)}$ is analytic in the closed bidisk, so that the integral equals its value at 0 (due to Cauchy's integral formula). Since 
$$ \langle z_1^{k_1} z_2^{k_2} q (z_1,z_2) ,  z_1^{l_1} z_2^{l_2}\rangle =  \langle z_1^{k_1-l_1} z_2^{k_2-l_2} q (z_1,z_2) , 1 \rangle , $$
we obtain that 
\begin{equation}\label{or} z_1^{k_1} z_2^{k_2} q (z_1,z_2) \perp {\rm Span} \{ z_1^{l_1} z_2^{l_2} : l_1 = 0, \ldots , k_1+1, l_2 = 0, \ldots , k_2 +1, (l_1,l_2) \neq (k_1+1, k_2+1 ) \} . \end{equation}
Finally, 
$$ \langle z_1^{k_1}z_2^{k_2} q (z_1,z_2) ,  z_1^{k_1}z_2^{k_2}q (z_1,z_2) \rangle = \langle q (z_1,z_2) ,  q (z_1,z_2) \rangle=\langle q (z_1,z_2) ,  z_1 z_2\rangle = $$
$$ \frac{1}{4\pi^2} \int_{\mathbb T}  \int_{\mathbb T}  \frac{z_1^{-1} z_2^{-1} q(z_1, z_2)  }{p(z_1,z_2) p(\frac{1}{z_1} , \frac{1}{z_2} )} \frac{dz_1}{iz_1} \frac{dz_2}{iz_2} = \frac{1}{4\pi^2} \int_{\mathbb T}  \int_{\mathbb T}  \frac{1  }{p(z_1,z_2) } \frac{dz_1}{iz_1} \frac{dz_2}{iz_2} =1 , $$ again by Cauchy's integral formula. Note that in the second equality we used \eqref{or}. The characterization of orthonormal polynomials in this ordering (see \cite{DGK,GWSIAM}) now
establishes the proof.
\end{proof}

\begin{acknowledgements}
We wish to thank Professors Robin Pemantle, Stephen Melczer, and Richard Paris for many useful discussions. In addition, we very much appreciate Stephen Melczer generously sharing his Maple and Sage code with us. We also greatly appreciate the detailed and constructive feedback from the referees.
\end{acknowledgements}

%
%


\begin{thebibliography}{}

\bibitem{AAR}  George E. Andrews, Richard Askey, and Ranjan Roy.
Special functions. 
Encyclopedia of Mathematics and its Applications, 71. Cambridge University Press, Cambridge, 1999. 

\bibitem{BW} Richard Beals and Roderick Wong. Special functions. A graduate text. Cambridge Studies in Advanced Mathematics, 126. Cambridge University Press, Cambridge, 2010. 

%

\bibitem{DGK}
 Ph. Delsarte, Y. V. Genin, and Y. G. Kamp, {Planar least squares inverse polynomials. {I}. {A}lgebraic
   properties}, IEEE Trans. Circuits and Systems {CAS-26} (1979), no.~1,
   59--66.

\bibitem{Fields} J. L. Fields, {Confluent expansions}, Math. Comp. 21, (1967), 189-197

\bibitem{GR} G. Gasper and M. Rahman, {Basic Hypergeometric Series.} Cambridge etc., Cambridge University Press 1990.

\bibitem{GWAnnals} Jeffrey S. Geronimo and Hugo J. Woerdeman. Positive extensions, Fej\'er-Riesz factorization and autoregressive filters in two variables. Ann. of Math. (2) 160 (2004), no. 3, 839--906.

\bibitem{GWIEEE} Jeffrey S. Geronimo and Hugo J. Woerdeman. Two-variable polynomials: intersecting zeros and stability. IEEE Trans. Circuits Syst. I. Regul. Pap. 53 (2006), no. 5, 1130--1139. 

\bibitem{GWSIAM} Jeffrey S. Geronimo and Hugo J. Woerdeman. Two variable orthogonal polynomials on the bicircle and structured matrices. SIAM J. Matrix Anal. Appl. 29 (2007), no. 3, 796--825.

\bibitem{Jones} D. S. Jones, Asymptotic behavior of integrals, SIAM Review 14 (1972), 286--317.

\bibitem{Melczer} Stephen Melczer, An Invitation to Analytic Combinatorics: From One to Several Variables, Springer Texts \& Monographs in Symbolic Computation (in press), 2020.

\bibitem{Olver} Frank W. J. Olver. Asymptotics and special functions. Reprint of the 1974 original [Academic Press, New York; MR0435697]. AKP Classics. A K Peters, Ltd., Wellesley, MA, 1997. 


\bibitem{ParisI} R. B. Paris. Asymptotics of the Gauss hypergeometric function with large parameters, I. J. Class. Anal. 2 (2013), no. 2, 183--203.


\bibitem{ParisIV} R. B. Paris.  Asymptotics of a Gauss hypergeometric
function with large parameters, IV: A uniform
expansion, ArXiv 1809.08794v2, 2018.

\bibitem{PW2010} R. Pemantle and M. C. Wilson,
Asymptotic expansions of oscillatory integrals with complex phase. Algorithmic probability and combinatorics, 221--240, 
{\it Contemp. Math.}, 520, Amer. Math. Soc., Providence, RI, 2010. 

\bibitem{PW} R. Pemantle and M. C. Wilson,
Analytic combinatorics in several variables. 
Cambridge Studies in Advanced Mathematics, 140. Cambridge University Press, Cambridge, 2013. 

\bibitem{wolfram} Wolfram Research, Inc., Mathematica, Champaign, IL. {\sf www.wolfram.com/mathematica}

\bibitem{Wong} Chung Y. Wong. Spectral Density Functions and Their Applications. Thesis (Ph.D.)--Drexel University. 2016. 

\end{thebibliography}


\end{document}